\theoremstyle{plain}
\newtheorem{theorem}{Theorem}[section]
\newtheorem{proposition}[theorem]{Proposition}
\newtheorem{lemma}[theorem]{Lemma}
\newtheorem{cor}[theorem]{Corollary}
\theoremstyle{remark}
\newtheorem{rem}[theorem]{Remark}
\newtheorem{ex}[theorem]{Example}
\newtheorem{definition}[theorem]{Definition}
\newcommand{\F}{{\mathcal F}}
\newcommand{\N}{{\mathbb N}}
\newcommand{\Z}{{\mathbb Z}}
\newcommand{\Q}{{\mathbb Q}}
\newcommand{\C}{{\mathbb C}}
\newcommand{\R}{{\mathbb R}}
\newcommand{\K}{{\mathbb K}}
\numberwithin{equation}{section}
\newcommand{\av}{{\bf a}}
\newcommand{\dv}{{\bf d}}
\newcommand{\vv}{{\bf v}}
\newcommand{\nullv}{{\bf 0}}
\newcommand{\DD}{\mathcal{D}}
\newcommand{\FF}{\mathcal{F}}
\newcommand{\GG}{\mathcal{G}}
\newcommand{\HH}{\mathcal{H}}
\newcommand{\NN}{\mathcal{N}}
\newcommand{\OO}{\mathcal{O}}
\renewcommand{\SS}{\mathcal{S}}
\newcommand{\Cc}{\mathbb{C}}
\newcommand{\Rr}{\mathbb{R}}
\newcommand{\Qq}{\mathbb{Q}}
\newcommand{\Zz}{\mathbb{Z}}
\newcommand{\Kk}{\mathbb{K}}
\newcommand{\medfrac}[2]{\mbox{\large{$\textstyle{\frac{#1}{#2}}$}}}
\newcommand{\medbinom}[2]{\mbox{\large{$\textstyle{\binom{#1}{#2}}$}}}
\newcommand{\half}{\mbox{$\textstyle{\frac{1}{2}}$}}
\renewcommand{\ge}{\geq}
\renewcommand{\le}{\leq}
\newcommand{\kdots}{,\ldots ,}
\newcommand{\ie}{\emph{i.e.},\ }
\renewcommand{\mod}[3]{#1\equiv#2\,({\rm mod}\,#3)}
\renewcommand{\gcd}{{\rm gcd}}
\def\house#1{\setbox1=\hbox{$\,#1\,$}%
\dimen1=\ht1 \advance\dimen1 by 2pt \dimen2=\dp1 \advance\dimen2
by 2pt
\setbox1=\hbox{\vrule height\dimen1 depth\dimen2\box1\vrule}%
\setbox1=\vbox{\hrule\box1}%
\advance\dimen1 by .4pt \ht1=\dimen1 \advance\dimen2 by .4pt
\dp1=\dimen2 \box1\relax}
\renewcommand{\theta}{\vartheta}
\newcommand{\oz}{\!\otimes_{\mathbb{Z}}\!}
\newcommand{\rank}{{\rm rank}\,}
\begin{document}

%\title[Number systems over \'etale algebras]{Number systems over orders of finite \'etale algebras}
\title[Number systems over general orders]{Number systems over general orders}
\subjclass[2010]{11A63, 11R04}
%\keywords{Number system, \'etale algebra, order, tiling}
\keywords{Number system, order, tiling}
\author[J.-H. Evertse]{Jan-Hendrik~Evertse}
\author[K. Gy\H{o}ry]{K\'alm\'an~Gy\H{o}ry}
\author[A. Peth\H{o}]{Attila~Peth\H{o}}
\author[J.M. Thuswaldner]{J\"org~M.~Thuswaldner}
\address{J.-H.~Evertse, Leiden University, Mathematical Institute, P.O. Box 9512, 2300 RA Leiden, THE NETHERLANDS}
\email{evertse@math.leidenuniv.nl}
\address{K.~Gy\H{o}ry, Institute of Mathematics, University of Debrecen,
H-4002 Debrecen, P.O. Box 400, HUNGARY}
\email{gyory@science.unideb.hu}
\address{A.~Peth\H{o}, Department of Computer Science, University of Debrecen,
H-4010 Debrecen, P.O. Box 12, HUNGARY and\\
University of Ostrava, Faculty of Science,
Dvo\v{r}\'{a}kova 7, 70103 Ostrava, CZECH REPUBLIC}
\email{Petho.Attila@inf.unideb.hu}
\address{J.M.~Thuswaldner, Chair of Mathematics and Statistics,
University of Leoben, Franz-Josef-Strasse 18, A-8700 Leoben,
AUSTRIA}
\email{Joerg.Thuswaldner@unileoben.ac.at}
\thanks{Research of K.Gy. was supported in part by the OTKA grants NK115479. Research of A.P. was supported in part by the grant no. 17-02804S of the Czech Grant Agency. Research of J.T. is supported by the FWF grants P27050 and P29910.}
\date{\today}

\begin{abstract}
Let $\OO$ be an order, that is a commutative ring with $1$
whose additive structure is a free $\Zz$-module of finite rank.
A \emph{generalized number system} (GNS for short) over $\OO$ is a pair
$(p,\DD )$ where $p\in\OO [x]$ is monic with constant term $p(0)$
not a zero divisor of $\OO$, and where $\DD$ is a complete
residue system modulo $p(0)$ in $\OO$ containing $0$.
We say that $(p,\DD )$ is a
GNS over $\OO$ with the finiteness property if all elements of
$\OO [x]/(p)$ have a representative in $\DD [x]$ (the polynomials
with coefficients in $\DD$). Our purpose is to extend several
of the results from a previous paper of Peth\H{o} and
Thuswaldner,
where GNS over orders of number fields were considered.
We prove that it is algorithmically decidable whether or not for
a given order $\OO$ and GNS $(p,\DD )$ over $\OO$,
the pair $(p,\DD )$ admits the finiteness property.
This is closely related to work of Vince
on matrix number systems.

Let $\FF$ be a fundamental
domain for $\OO\oz\Rr/\OO$ and $p\in \OO [X]$ a monic polynomial.
For $\alpha\in\OO$, define $p_{\alpha}(x):=p(x+\alpha )$ and
$\DD_{\FF ,p(\alpha )}:= p(\alpha )\FF\cap\OO$.
Under mild conditions we show that the pairs
$(p_{\alpha},\DD_{\FF,p(\alpha)})$ are GNS over $\OO$ with finiteness
property provided $\alpha\in\OO$ in some sense approximates
a sufficiently large positive rational integer.
In the opposite direction we prove under different conditions that
$(p_{-m},\DD_{\FF ,p(-m)})$ does not have the finiteness property for each large enough positive rational integer $m$.

We obtain important relations between power integral bases
of \'{e}tale orders, and GNS over $\Zz$.
Their proofs depend on some general effective finiteness results of Evertse and Gy\H{o}ry on monogenic \'{e}tale orders.
% The paper ends with some speculations on possible further generalizations.
\end{abstract}

\date{\today}

\maketitle

\section{Introduction}\label{s1}
Decimal and sexagesimal representations of the positive integers have been used since the times of antiquity. A computer's ``native language'' consists of the binary strings,
which can be interpreted among others as binary representations of integers. Starting with the pioneering work of V. Gr\"unwald \cite{Grunwald:1885} many generalizations have been established. For an overview we refer to the papers \cite{ABBPT_I,PT:2017} and to the book~\cite{Evertse_Gyory}.

In the present paper, $\OO$ will denote an order, that is a commutative ring
with $1$ whose additive group is free abelian of finite rank.
We identify $m\in\Zz$ with $m\cdot 1$, and thus assume $\Zz\subset\OO$.
The order $\OO$ may be given explicitly by a basis $\{ 1=\omega_1,\omega_2\kdots\omega_d\}$ and a multiplication table
\begin{equation}\label{eq:multiplicationtable}
\omega_i\omega_j =\sum_{l=1}^d a_{ijl}\omega_l\ \
(i,j=2\kdots d)\ \ \text{with } a_{ijl}\in\Zz,
\end{equation}
satisfying the commutativity and associativity rules.
A \emph{generalized number system} over $\OO$ (GNS over $\OO$ for short)
is a pair $(p,\DD )$, where $p\in\OO [x]$ is a monic polynomial
such that $p(0)$ is not a zero divisor of $\OO$,
and where $\DD$ is a (necessarily finite) complete residue system of $\OO$ modulo $p(0)$
containing $0$.

An element $a\in \mathcal{O}[x]$ is \emph{representable in $(p,\mathcal{D})$} if there exist an integer $L\ge 0$ and $a_0,\dots,a_L\in \mathcal{D}$ such that
\begin{equation} \label{eq:kongruencia}
a \equiv \sum_{j=0}^L a_j x^j \pmod p,
\end{equation}
\ie if there is $q\in\OO [x]$ such that $a+qp$
has its coefficients in $\DD$.
%In this paper $\K$ will denote an algebraic number field with ring of integers $\Z_{\K}$. Let $\mathcal{O}$ be an order\footnote{More precisely we should call $\mathcal{O}$ a $\Z$-order, but as we are dealing exclusively with $\Z$-orders, we may omit $\Z$.} in $\Z_{\K}$, {\it i.e.}, a subring of $\Z_{\K}$ with unity which is also a full rank $\Z$-module. The pair $(p,\mathcal{D})$, where $p\in \mathcal{O}[x]$ is monic and $\mathcal{D}\subset \OO$ is a complete residue system modulo $p(0)$, is called {\it generalized number system} over $\OO$, GNS for short. An element $a\in \mathcal{O}[x]$ is representable in $(p,\mathcal{D})$ if there exist an integer $L\ge 0$ and $a_0,\dots,a_L\in \mathcal{D}$ such that
Our condition that $p(0)$ not be a zero divisor of $\OO$ implies
that a representation of $a\pmod p$ as in \eqref{eq:kongruencia},
if it exists, is unique (except for ``leading zeros'').
If all $a\in \mathcal{O}[x]$ are representable in $(p,\DD)$, then $(p,\DD)$ is called a GNS with the \emph{finiteness property}.
This concept was introduced for %$\K=\Q$
$\OO =\Zz$ by Peth\H{o} \cite{pethoe1991:polynomial_transformation_and} and extended to orders $\OO$ in number fields by Peth\H{o} and Thuswaldner \cite{PT:2017}.

GNS over orders may be viewed as
special cases of \emph{matrix number systems}, which
were introduced by Vince \cite{Vince}. A matrix number system
is a triple $(\Lambda ,\varphi ,D)$, consisting of a
free abelian group $\Lambda$ of finite rank, an injective homomorphism
$\varphi :\Lambda\to\Lambda$, and a complete residue system
$D$ for $\Lambda/\varphi (\Lambda )$ with $0\in D$.
Then a GNS $(p,\DD )$ over $\OO$ may be viewed as a matrix
number system with $\Lambda =\OO [x]/(p)$,
$\varphi: f\pmod{p}\mapsto x\cdot f\pmod{p}$, and $D=\DD$.

We briefly recall some of the results from the paper \cite{PT:2017}
of Peth\H{o} and Thuswaldner, but
reformulate them in terms of the language of the present paper.
Let $\OO$ be an order of a number field.
We embed $\OO$ in the $\Rr$-algebra $\OO\oz\Rr$
and view $\OO$ as a full rank sublattice of $\OO\oz\Rr$.
The real innovation of \cite{PT:2017} is to consider
parametrized classes of GNS over $\OO$
determined by a monic polynomial $p\in\OO [x]$ and a
fundamental domain $\FF$ of $\OO\oz\Rr/\OO$
with $0\in\FF$, \ie $\FF$ is a subset of $\OO\oz\Rr$
consisting of precisely one element from every residue class
of $\OO\oz\Rr$ modulo
$\OO$. More precisely, one considers GNS of the type
$(p_{\alpha},\DD_{\FF ,p(\alpha )})$ ($\alpha\in\OO$),
where $p_{\alpha}(x):=p(x+\alpha )$ and
$\DD_{\FF ,\theta}:= \theta \FF\cap\OO$ for a non-zero element
$\theta$ of $\OO$.
In their paper, Peth\H{o} and Thuswaldner
proved that it is algorithmically decidable whether
a given GNS $(p,\DD)$ over $\OO$ has the finiteness property. Under mild conditions on $\FF$ they were able to prove that $(p_{\alpha}, \DD_{\FF,p(\alpha)})$ is a GNS with the finiteness property provided
that there is a large positive rational integer $m$ such that
$m^{-1}\alpha$ is close to $1$ (with respect to any vector norm
on $\OO\oz\Rr$).
Under different conditions on $\FF$ they proved that $(p_{-m},\DD_{\F,p(-m)})$ does not have the finiteness property
if $m$ is a sufficiently large positive rational integer.
These are far reaching generalizations of results of
Kov\'acs and Peth\H{o}~\cite{KovacsPetho}.
The purpose of the present paper is to extend the results
mentioned above from GNS over orders of number fields
to GNS over arbitrary orders. In particular, that for a given GNS
the finiteness property is effectively decidable is an easy
consequence of general work of Vince \cite{Vince}
on matrix number systems.

%%%%%%%%%%%%%%%%%%%%%%%%%%%%%%%%%%%%%%%%%%%%%%%%%%%%%%%%%%%%%%%%%%%%%%%
%QUESTION (JH): What is top-down?
%The residue class ring is also defined when $p$ is not monic
%but then it may not be finite.
%%%%%%%%%%%%%%%%%%%%%%%%%%%%%%%%%%%%%%%%%%%%%%%%%%%%%%%%%%%%%%%%%%%%%%%

%The GNS concept admits a different top-down interpretation, too. As $p$ is monic the factor ring
%\begin{equation}\label{eq:factorring}
%R=\OO[x]/p\OO[x]
%\end{equation}
%is well defined. The element $a \in \OO[x]$ is representable in $(p,\DD)$ if and only if all elements of the residue class $a+ p\OO[x]$ are representable, or, equivalently if there is an element in $a + p\OO[x]$ whose coefficients belong to $\DD$.

%%%%%%%%%%%%%%%%%%%%%%%%%%%%%%%%%%%%%%%%%%%%%%%%%%%%%%%%%%%%%%%%%%%%%%
%JH: More will have to be added
%%%%%%%%%%%%%%%%%%%%%%%%%%%%%%%%%%%%%%%%%%%%%%%%%%%%%%%%%%%%%%%%%%%%%%
\vskip0.2cm\noindent
{\bf Outline of the paper.}
Let $\OO$ be an arbitrary order.
In Section \ref{s2} we show that the finiteness property of a given
GNS $(p,\DD )$ over $\OO$ is effectively decidable by applying some of Vince's
results on matrix number systems \cite{Vince}.
In Section \ref{s3} we define the digit sets $\DD_{\FF ,\theta}$
using fundamental domains $\FF$ of $\OO\oz\Rr/\OO$ and prove a
sufficient condition for a GNS over $\OO$ to admit the finiteness
property ({\it cf.} Theorem \ref{th:1}).
In Section \ref{s4} we prove that if $p\in \OO[x]$ and the fundamental domain $\F$ satisfies some mild condition then the pairs $(p_{\alpha},\DD_{\FF,p(\alpha )})$ are always GNS with
the finiteness property provided
there is a large positive rational integer
$m$ such that $m^{-1}\alpha$ is close to $1$ (this is the content of Theorem~\ref{th:newKovacs}).

Section \ref{s5}
is devoted to GNS without the finiteness property. The main result is Theorem~\ref{non-ECNSmain} which states that $(p_{-m}, \DD_{\FF,p(-m)})$ does not have the finiteness property for all large enough positive rational integers $m$.

Using some general effective finiteness results of Evertse and Gy\H{o}ry \cite[Corollary8.4.7]{Evertse_Gyory} on monogenic orders in \'etale algebras ({\it cf.} also Proposition~\ref{E-Gy} in Section \ref{s6} below), we obtain important relations between power integral bases and number systems in \'{e}tale orders (see Theorem~\ref{thm:newKovacsPetho}), which in turn can be interpreted
as GNS over $\Zz$.
%The paper ends with some speculations on the possibility of further generalization.

%\comment{This introduction should be rewritten? We should put a description of the results and an outline of the paper. Extend if necessary.} \comment{Make link to Peth\H{o} (1991). These are different from number systems in $\Q$-etale algebras. Done}

\section{Connection with matrix number systems}\label{s2}

Recall that a matrix number system is a triple $(\Lambda ,\varphi ,D)$,
consisting of a free abelian group $\Lambda$ of finite rank,
an injective $\Zz$-linear homomorphism $\varphi:\, \Lambda \to\Lambda$,
and a complete residue system $D$ for
$\Lambda/\varphi (\Lambda)$, containing $\nullv$.
The rank of this matrix number system is the
rank of $\Lambda$, and its determinant is
the cardinality of $\Lambda/\varphi (\Lambda )$
(which is equal to the absolute value of the determinant of $\varphi$).
Denote by $\chi_{\varphi}$ the characteristic polynomial
of $\varphi$, \ie
$\det (x\cdot{\rm id}-\varphi )$.
This characteristic polynomial is monic of degree equal to the rank
of $\Lambda$, with coefficients in $\Zz$ and non-zero
constant term $\pm\det\varphi$.

We say that $(\Lambda,\varphi ,D)$ has the finiteness property
if every $\vv\in\Lambda$ can be expressed as a finite sum
$\sum_{i=0}^L \varphi^i\dv_i$, with $\dv_i\in D$ for $i=0\kdots L$.
Such systems were introduced by Vince \cite{Vince} (he used
a different terminology). We recall some of Vince's results.

Let $(\Lambda ,\varphi ,D)$ be a matrix number system.
Here, and similarly in other situations below,
we identify elements of $\Lambda$
with their images in the real vector space $\Lambda\oz\Rr$ under the canonical embedding.
Thus, we view $\Lambda$ as a lattice (discrete subgroup of maximal
rank) in $\Lambda\oz\Rr$,
and if $\{ \av_1\kdots\av_d\}$ is a $\Zz$-basis of $\Lambda$,
it is also an $\Rr$-vector space basis of $\Lambda\oz\Rr$.
We endow $\Lambda\oz\Rr$ with a vector norm $\|\cdot\|$;
this induces a norm on $\Lambda$.

\begin{proposition}\label{vince-1}
Assume that $(\Lambda ,\varphi ,D)$ has the finiteness property.
Then $\chi_{\varphi}$ is expansive, \ie all its
complex roots have absolute value $>1$.
\end{proposition}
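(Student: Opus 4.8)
The plan is to argue by contradiction: suppose $\chi_{\varphi}$ has a complex root $\lambda$ with $|\lambda|\le 1$, and produce an element of $\Lambda$ that cannot be written as a finite sum $\sum_{i=0}^L \varphi^i\dv_i$ with $\dv_i\in D$. The first step is to pass to the real vector space $V:=\Lambda\oz\Rr$ and consider the $\Rr$-linear extension of $\varphi$, which I will also call $\varphi$; its eigenvalues are exactly the complex roots of $\chi_{\varphi}$. By assumption the (possibly generalized) eigenspace decomposition of $V_{\Cc}=V\oz\Cc$ contains a nonzero $\varphi$-invariant subspace $W\subseteq V$ on which all eigenvalues of $\varphi|_W$ have absolute value $\le 1$; concretely, take $W$ to be the sum of the real and imaginary parts of the generalized eigenspaces for eigenvalues of modulus $\le 1$, so that $W$ is a rational (hence $\Lambda$-rational) subspace and $\varphi(W)\subseteq W$.

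The key estimate is that on such a $W$ the iterates $\varphi^i$ grow at most polynomially: there is a constant $C>0$ with $\|\varphi^i w\|\le C\,(i+1)^{d}\,\|w\|$ for all $w\in W$ and all $i\ge 0$, where $d=\rank\Lambda$. (This is the standard Jordan-form bound; the $(i+1)^d$ factor absorbs the nilpotent part, and the spectral radius $\le 1$ kills the exponential growth.) Now fix a $\varphi$-invariant complement is not needed — instead let $\pi:V\to V/W^{\perp}$ fail; cleaner is to choose a linear functional. Precisely, since $W\ne 0$ is $\varphi$-invariant and rational, pick a nonzero $\varphi^{*}$-eigen-like functional: let $\ell:V\to\Rr$ be a linear functional that does not vanish identically on $W$ but whose restriction behaves controllably — most simply, just work inside $W$ directly by projecting. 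Let $P:V\to W$ be any $\varphi$-equivariant projection onto $W$ (available because $W$ is spanned by generalized eigenspaces, so there is a $\varphi$-invariant complementary subspace $W'$ with $V=W\oplus W'$, both rational). Set $M:=\max_{\dv\in D}\|P\dv\|$, which is finite since $D$ is finite.

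Then for any finite sum $v=\sum_{i=0}^L \varphi^i\dv_i$ we get, applying $P$ and using $P\varphi^i=\varphi^i P$ together with the polynomial bound,
\[
\|Pv\|\;\le\;\sum_{i=0}^{L}\|\varphi^i P\dv_i\|\;\le\; C M\sum_{i=0}^{\infty}(i+1)^{d}\cdot 0 \quad\text{— no,}
\]
the series diverges, so this crude bound is insufficient and I must be more careful: the exponential decay must come from eigenvalues of modulus $<1$, not $\le 1$, so polynomial growth alone does not bound the sum. The genuine argument therefore restricts to eigenvalues of modulus exactly $1$ is still problematic. The correct route, which I expect to be the main obstacle, is the one Vince uses: show that the set $\{\sum_{i=0}^L \varphi^i\dv_i : L\ge 0,\ \dv_i\in D\}$ lies within bounded distance of the expanding directions, hence is \emph{not} all of $\Lambda$ unless there are no non-expanding directions. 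Concretely, one shows that $P v$ for representable $v$ stays in a bounded region: decompose $\Lambda\oz\Rr = E_{>1}\oplus E_{\le 1}$ into the $\varphi$-invariant subspaces where eigenvalues have modulus $>1$ resp. $\le 1$; on $E_{\le 1}$ the \emph{backward} iteration contracts, so writing $v=\sum_{i=0}^{L}\varphi^i\dv_i$ and noting $\varphi^{-1}$ is a contraction on $E_{\le 1}$ up to a polynomial factor — again the delicate point is that on the modulus-$1$ part $\varphi^{-1}$ is only polynomially bounded, not contracting. To handle this cleanly I would invoke that a matrix number system with the finiteness property is in particular \emph{surjective up to} the lattice, and use a counting/volume argument: the number of representable elements with $L$ terms is at most $|D|^{L+1}$, while if some eigenvalue has modulus $\le 1$ these representable elements all lie (after projection to $E_{\le 1}$) in a ball of radius $O(L^{d})$, which contains only $O(L^{d\cdot\dim E_{\le 1}})$ lattice points of $\Lambda\cap E_{\le 1}$-cosets; comparing the \emph{total} count forces $\dim E_{\le 1}=0$. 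Wait — $|D|^{L+1}$ grows faster than any polynomial in $L$, so this over-counts; the resolution is that representability is a statement about $\Lambda$, not about distinct sums, and the honest obstruction is:

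**The honest argument.** Since $\chi_\varphi$ is the characteristic polynomial of $\varphi$ acting on the \emph{finite} set $\Lambda/\varphi\Lambda$-tower, observe that every representable $v\in\Lambda$ satisfies $v\in D + \varphi\Lambda$, hence $v$ reduces mod $\varphi\Lambda$ to an element of $D$; iterating, $v$ lies in the union $\bigcup_{L\ge 0}(D+\varphi D+\dots+\varphi^L D)$. If $E_{\le 1}\ne 0$, let $q:\Lambda\to\Lambda/(\Lambda\cap E_{>1})$; this quotient is a lattice $\Lambda_0$ in $E_{\le 1}$ on which $\varphi$ induces $\varphi_0$ with spectral radius $\le 1$, and $\varphi_0$ has $|\det\varphi_0|\ge 1$ an integer, so in fact $|\det\varphi_0|=1$ and all eigenvalues of $\varphi_0$ have modulus exactly $1$; then $\varphi_0$ and $\varphi_0^{-1}$ both preserve a norm up to polynomial factors, so $\{q(v): v \text{ representable}\}\subseteq \bigcup_L\sum_{i=0}^L\varphi_0^i\,q(D)$ has the property that each element of the $L$-th set has norm $O(L^{d})$. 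Thus a representable $v$ with $q(v)=w$ forces $\|w\|\le O(L^{d})$ for the length $L$ used. But now pick any $w\in\Lambda_0$ and note $w$ is represented only using $L\ge c\|w\|^{1/d}$ digits — this does \emph{not} yet give a contradiction. The contradiction comes from the \emph{reverse} inequality on $E_{>1}$: if $v$ is representable with length $L$ then its $E_{>1}$-component has norm $\le C'\rho^{L}$ for $\rho$ the top eigenvalue modulus — so representable elements of length $\le L$ all lie in a box of volume $O(\rho^{dL}\cdot L^{d\dim E_{\le 1}})$, containing $O(\rho^{dL}L^{d\dim E_{\le1}})$ lattice points, which must be $\ge$ the number of lattice points of $\Lambda$ in that box — consistent, so still no contradiction; the real point (Vince) is that the \emph{digit map} $v\mapsto (\dv_0,\dv_1,\dots)$ is injective because $\varphi$ is, so $|D|^{L+1}\ge \#\{$representable $v$ of length $\le L\}$, and if the system has the finiteness property this equals $\#(\Lambda\cap\text{box})\sim \text{vol}/\text{covol} = c\,\rho^{dL}L^{d\dim E_{\le 1}}$ \emph{provided} $E_{\le 1}\ne 0$ makes the box genuinely fatter — but since $|D|=|\det\varphi|=\rho^{d}\cdot 1$ exactly (determinant $=$ product of eigenvalue moduli $=\rho_{>1}\text{-part}$, with the $E_{\le1}$ part contributing $1$), we get $|D|^{L+1}=\rho^{d(L+1)}$, whereas the lattice-point count is $\gg \rho^{dL}\cdot L^{d\dim E_{\le 1}}$; if $\dim E_{\le 1}\ge 1$ the latter eventually exceeds $\rho^{d(L+1)}$, contradiction. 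Hence $\dim E_{\le 1}=0$, i.e. $\chi_\varphi$ is expansive. $\qed$

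I expect the genuine obstacle to be making the box/volume comparison rigorous — specifically proving $\#\{$representable $v$ of length $\le L\}\le |D|^{L+1}$ (immediate from injectivity of $\varphi$) together with a \emph{lower} bound on lattice points in the relevant region that actually uses $\dim E_{\le 1}\ge 1$ — and verifying $|D|=|\det\varphi|$ equals the product of the moduli of the expanding eigenvalues, using that the non-expanding block has integer determinant of absolute value $1$. I would reference Vince~\cite{Vince} for the precise form of this counting argument rather than reproduce all of it.
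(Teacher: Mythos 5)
The paper itself gives no proof for this proposition---it simply cites Vince---so your task was to fill that in, and as written your attempt does not succeed: it narrates several abandoned approaches in the first person (``no,'' ``Wait,'' ``so still no contradiction'') and closes by deferring to Vince for ``the precise form of this counting argument,'' i.e.\ you have located where the real work is but not done it.

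Beyond the exposition problem there is a recurring concrete gap. You repeatedly assert that the sum $E_{>1}$ (resp.\ $E_{\le 1}$) of generalized $\varphi$-eigenspaces for eigenvalues of modulus $>1$ (resp.\ $\le 1$) is a \emph{rational} subspace, so that $\Lambda\cap E_{>1}$ is a sublattice, $\Lambda_0:=\Lambda/(\Lambda\cap E_{>1})$ is a lattice in $E_{\le1}$, and the induced map $\varphi_0$ has integer determinant, forcing $\left|\det\varphi_0\right|=1$ and hence all non-expanding eigenvalues to have modulus exactly $1$. This is false in general: $\chi_{\varphi}\in\Zz[x]$ can have a $\Qq$-irreducible factor with some roots inside and some outside the unit circle, and then the modulus-decomposition is defined only over the splitting field, not over $\Qq$. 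For instance $\chi_{\varphi}=x^{2}-7x+2$ has roots $(7\pm\sqrt{41})/2\approx 6.70,\ 0.30$; the two eigenlines are defined over $\Qq(\sqrt{41})$, meet $\Lambda=\Zz^{2}$ only in $0$, and $\Lambda/(\Lambda\cap E_{>1})$ is not a lattice in $E_{\le1}$. In this example the non-expanding eigenvalue has modulus $\approx 0.30\neq 1$, so your conclusion about $\varphi_0$ is simply wrong. The correct partition is: eigenvalues of modulus strictly less than $1$ are easy (a left-eigenvector functional $\ell$ gives $\bigl|\ell\bigl(\sum_{i}\varphi^{i}\dv_{i}\bigr)\bigr|\le\sum_{i}|\lambda|^{i}\max_{D}|\ell|<\infty$, so $\ell$ is bounded on all representable elements yet unbounded on $\Lambda$); only \emph{after} ruling these out can one say the remaining non-expanding eigenvalues have modulus exactly $1$. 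And even with that repair, your closing box-versus-$|D|^{L+1}$ count does not close: the representable elements of length $\le L$ lie in a region of volume roughly $L^{O(1)}|\det\varphi|^{L}$ while their number is $|\det\varphi|^{L+1}$, so the resulting inequality $|\det\varphi|^{L+1}\lesssim L^{O(1)}|\det\varphi|^{L}$ is satisfied for all large $L$ and yields no contradiction; the genuinely subtle modulus-one case needs the more careful dynamical/counting argument Vince gives, and citing him, as the paper does, is the honest route.
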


\begin{proof}
See Vince \cite[p. 508, Prop. 4]{Vince}.
\end{proof}

Assume henceforth that $\chi_{\varphi}$ is expansive.
For $\vv\in\Lambda$,
we define the sequences $(\vv_i)_{i=0}^{\infty}$
in $\Lambda$
and $(\dv_i)_{i=0}^{\infty}$ in $D$ inductively by
\begin{eqnarray*}
&&\vv_0:=\vv ;
\\
&&\dv_i\in D\ \text{is the representative of }
\vv_i ({\rm mod}\ \varphi (\Lambda )),
\\
&&\vv_{i+1}:=\varphi^{-1}(\vv_i-\dv_i)\ \ \
(i=0,1,2,\ldots ).
\end{eqnarray*}
In an appropriate completion of $\Lambda$ we can now write
$\vv =\sum_{i=0}^{\infty} \varphi^i\dv_i$
and call this the $(\varphi ,D)$-expansion of $\vv$.
Vince \cite[p.~511 AlgorithmA and p.~512, Lemma~2]{Vince}
observes that there is an effectively
computable number $C>0$ depending on $\Lambda, \varphi, D, \vv$
such that $\|\vv_i\|\leq C$ for all $i$. This implies that the
sequences $(\vv_i)_{i=0}^{\infty}$
and $(\dv_i)_{i=0}^{\infty}$ are ultimately periodic, where
for both sequences the sum of the lengths of the preperiod and period
is bounded above by the number $R$ of points in $\Lambda$
of norm at most $C$.
Further, it is clear that $\vv$ has a finite $(\varphi, D)$-expansion
$\vv =\sum_{i=0}^L \varphi^i\dv_i$ if and only if $\vv_i=\dv_i=\nullv$
for $i>L$. So we may take $L<R$. This shows that for given
$\vv\in\Lambda$, we can effectively decide whether it has a finite
$(\varphi ,D)$-expansion, and if such an expansion exists,
it has length $\leq R$.

The following result implies that it can be effectively decided
whether $(\Lambda ,\varphi ,D)$ has the finiteness property.

\begin{proposition}\label{vince-2}
Let $(\Lambda ,\varphi ,D)$ be a matrix number system.
There is an effectively computable number $C'>0$ depending
on $\Lambda$, $\varphi ,D$ such that the following are equivalent:
\\
(i) $(\Lambda ,\varphi ,D)$ has the finiteness property;
\\
(ii) $\chi_{\varphi}$ is expansive, and every $\vv\in\Lambda$ with $\|\vv\|\leq C'$ has a finite $(\varphi ,D)$-expansion.
\end{proposition}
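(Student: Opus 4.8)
The plan is to take $C'$ to be an effective bound that controls the \emph{tails} of $(\varphi,D)$-expansions, and then to run the digit algorithm recalled just before the statement. The implication (i)$\Rightarrow$(ii) is immediate and independent of the value of $C'$: if $(\Lambda,\varphi,D)$ has the finiteness property, then in particular every $\vv\in\Lambda$ (hence every one with $\|\vv\|\le C'$) has a finite $(\varphi,D)$-expansion, and $\chi_{\varphi}$ is expansive by Proposition~\ref{vince-1}. So the work is in (ii)$\Rightarrow$(i) together with the construction of $C'$.

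Assume $\chi_{\varphi}$ is expansive. Since every complex root of $\chi_{\varphi}$ has absolute value $>1$, the eigenvalues of $\varphi^{-1}$ all have absolute value $<1$, so one can fix an effectively computable rational $\rho\in(0,1)$ and an effectively computable constant $\kappa\ge 1$ with $\|\varphi^{-i}\wv\|\le\kappa\rho^i\|\wv\|$ for all $\wv\in\Lambda\oz\Rr$ and all $i\ge 0$ (bound $\|\varphi^{-i}\|$ via the Jordan form of $\varphi$ and absorb the polynomial factor into a slightly larger $\rho$). Unwinding the recursion defining $(\vv_i)$ gives $\vv_i=\varphi^{-i}\vv_0-\sum_{j=1}^{i}\varphi^{-j}\dv_{i-j}$, whence
\[
\|\vv_i\|\;\le\;\kappa\rho^i\|\vv_0\|+\frac{\kappa\rho}{1-\rho}\,\max_{\dv\in D}\|\dv\|.
\]
Set $C':=1+\frac{\kappa\rho}{1-\rho}\max_{\dv\in D}\|\dv\|$, which depends only on $\Lambda,\varphi,D$ and is effectively computable. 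The estimate shows that for every $\vv\in\Lambda$ there is an index $i_0$ with $\|\vv_i\|\le C'$ for all $i\ge i_0$.

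Now suppose (ii) holds and let $\vv\in\Lambda$ be arbitrary; choose $i_0$ as above, so $\|\vv_{i_0}\|\le C'$ and by hypothesis $\vv_{i_0}$ has a finite $(\varphi,D)$-expansion. Here one uses that the $(\varphi,D)$-expansion is unique: in any representation $\sum_i\varphi^i\dv_i$ of an element of $\Lambda$, the digit $\dv_0$ is forced to be its representative modulo $\varphi(\Lambda)$, and then the remaining digits are determined inductively by applying $\varphi^{-1}$ (here injectivity of $\varphi$ and the fact that $D$ is a complete residue system are exactly what is needed). Hence the finite expansion of $\vv_{i_0}$ must be the one produced by the algorithm, i.e. $\vv_i=\dv_i=\nullv$ for all large $i$. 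Since $\vv=\sum_{i=0}^{i_0-1}\varphi^i\dv_i+\varphi^{i_0}\vv_{i_0}$, it follows that $\vv$ too has a finite $(\varphi,D)$-expansion, and as $\vv$ was arbitrary, $(\Lambda,\varphi,D)$ has the finiteness property.

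The only genuinely delicate point is ensuring that $C'$ depends on $\Lambda,\varphi,D$ alone and not on $\vv$: Vince's observation recalled above yields a bound $C$ depending also on $\vv$, and it is the geometric decay of the $\varphi^{-i}\vv_0$ term that lets us replace it by the $\vv$-free constant $C'$, since the eventual size of $\vv_i$ is governed purely by the digit set. Everything else is bookkeeping with the algorithm already in place, plus the (standard) effectivity of the spectral-radius bound.
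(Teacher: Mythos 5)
Your proof is correct. Note that the paper itself gives no argument for this proposition, only a citation to Vince~\cite[p.~513, Theorem~4]{Vince}, so there is no in-paper proof to compare against; your self-contained argument is the standard ``absorbing ball'' proof and is almost certainly the one Vince has in mind. The two essential points are both present and handled correctly: (a) unwinding $\vv_{i+1}=\varphi^{-1}(\vv_i-\dv_i)$ and using the spectral bound $\|\varphi^{-i}\wv\|\le\kappa\rho^i\|\wv\|$ to show that every orbit enters the ball of radius $C'=1+\kappa\rho(1-\rho)^{-1}\max_{\dv\in D}\|\dv\|$, with $C'$ independent of the starting point; and (b) uniqueness of the $(\varphi,D)$-digit string (forced by $D$ being a complete residue system modulo $\varphi(\Lambda)$ and $\varphi$ being injective), which converts ``$\vv_{i_0}$ has \emph{some} finite expansion'' into ``the algorithm run from $\vv_{i_0}$ terminates,'' and hence, via $\vv=\sum_{i<i_0}\varphi^i\dv_i+\varphi^{i_0}\vv_{i_0}$, into ``the algorithm run from $\vv$ terminates.'' One small suggestion: the phrase ``standard effectivity of the spectral-radius bound'' is doing the whole work of making $C'$ \emph{effectively computable}, which is the actual content of the statement; it would be worth spelling out that one isolates the (algebraic) roots of the integer polynomial $\chi_\varphi$ to choose a rational $\rho$ strictly between the spectral radius of $\varphi^{-1}$ and $1$, and then bounds $\kappa$ via an explicit norm comparison, rather than leaving it as a parenthetical.
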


\begin{proof}
See Vince \cite[p.~513, Theorem~4]{Vince}.
\end{proof}

We now specialize the above to generalized number systems.
Let $\OO$ be an order, and $(p,\DD )$ a GNS over $\OO$.

Let $\DD [x]$ denote the set of polynomials with
coefficients in $\DD$,
and $R(p,\DD )$ the set of $a\in\OO [x]$
such that $\mod{a}{b}{p}$ for some $b\in\DD [x]$.

With the usual identification
of an element of $\OO$ with its canonical image in
the  $\Rr$-algebra $\OO\oz\Rr$,
we view $\OO$ as a full rank
lattice in $\OO\oz\Rr$.

We endow the $\Rr$-algebra $\OO\oz\Rr$ with a vector norm $\|\cdot\|$.
For instance, fixing a $\Zz$-module basis $\{ 1=\omega_1,\omega_2\kdots\omega_d\}$ of $\OO$,
we can express every $\alpha\in\OO\oz\Rr$ as $\sum_{i=1}^d x_i\omega_i$
for some $x_1\kdots x_d\in\Rr$ and we may take $\|\alpha\|:=\max_i|x_i|$.
The elements of $\OO$ are those with $x_1\kdots x_d\in\Zz$, hence
for given $C$, the set of $\alpha\in\OO$ with $\|\alpha\|\leq C$
is finite and effectively determinable. But in what follows the choice of
norm doesn't matter. We define the norm $\| a\|$ of $a\in\OO [x]$ to be the
maximum of the norms of its coefficients.

As already mentioned in Section \ref{s1}, we can view the GNS $(p,\DD )$ as
a matrix number system $(\Lambda ,\varphi ,D)$, where
$\Lambda =\OO [x]/(p)$,
$\varphi : f\pmod{p}\mapsto x\cdot f\pmod{p}$ and $D=\DD$.
To see this, observe that $\Lambda/\varphi(\Lambda )\cong \OO [x]/(p,x)\cong \OO/p(0)\OO$, so that indeed $\DD$ is a complete residue system for $\Lambda/\varphi(\Lambda )$. Further,
one easily verifies that a congruence $\mod{a}{\sum_{i=0}^L d_ix^i}{p}$
translates into $a\pmod{p}=\sum_{i=0}^L\varphi^id_i$.
Using what we observed above for matrix number systems,
this shows that for given $a\in\OO [x]$ it can be decided effectively
whether it has a finite expansion $\sum_{i=0}^L d_ix^i\pmod{p}$.
Further, $(p,\DD )$ has the finiteness property if and only if
$(\Lambda ,\varphi ,\DD )$ has the finiteness property.

We may view $\OO[x]$ as a free $\Zz[x]$-module of finite rank,
and $a\mapsto p\cdot a$ as a $\Zz [x]$-linear map from
$\OO[x]$ to itself. The determinant of this $\Zz [x]$-linear map is a monic polynomial in $\Zz [x]$, which we denote by $Np$.

%For instance, if $\OO$ is an order in a number field $K$,
%$p=x^n+p_{n-1}x^{n-1}+\cdots +p_0\in\OO [x]$, and $\alpha\mapsto\alpha^{(i)}$ ($i=1\kdots d)$ are the %embeddings of $K$ in $\Cc$,
%then $Np=\prod_{i=1}^d (x^n+p_{n-1}^{(i)}x^{n-1}+\cdots +p_0^{(i)})$.

\begin{lemma}\label{lem:charpol}
$Np =\chi_{\varphi}$.
\end{lemma}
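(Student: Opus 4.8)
The plan is to identify both $Np$ and $\chi_{\varphi}$ as generators of one and the same ideal of $\Zz[x]$ — the zeroth Fitting ideal of $\Lambda=\OO[x]/(p)$ regarded as a $\Zz[x]$-module — and then to upgrade this to an honest equality of polynomials using monicity. First I would note that a $\Zz$-basis $\{1=\omega_1\kdots\omega_d\}$ of $\OO$ is at the same time a $\Zz[x]$-basis of $\OO[x]$, so $\OO[x]$ is free of rank $d$ over $\Zz[x]$; and since $p$ is monic it is a non-zero-divisor in $\OO[x]$, so
\[
0\longrightarrow \OO[x]\stackrel{\,\cdot\, p}{\longrightarrow}\OO[x]\longrightarrow \Lambda\longrightarrow 0
\]
is exact. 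Writing the middle map in the chosen basis exhibits $\Lambda$ with a finite free presentation $\Zz[x]^d\to\Zz[x]^d\to\Lambda\to 0$ whose presentation matrix $N$ satisfies $\det N=Np$ by the definition of $Np$; hence $\mathrm{Fitt}_0(\Lambda)=(Np)$.

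On the other hand, with $n=\deg p$ the module $\Lambda$ is free of rank $nd$ over $\Zz$, a basis being $\{\omega_ix^j:1\le i\le d,\ 0\le j\le n-1\}$, and its $\Zz[x]$-module structure is precisely that multiplication by $x$ acts through $\varphi$. This gives the classical presentation $\Zz[x]^{nd}\to\Zz[x]^{nd}\to\Lambda\to 0$ with presentation matrix $x\cdot\mathrm{id}-A$, where $A$ is the matrix of $\varphi$ in the above basis; consequently $\det(x\cdot\mathrm{id}-A)=\chi_{\varphi}$ and $\mathrm{Fitt}_0(\Lambda)=(\chi_{\varphi})$ as well. By the invariance of Fitting ideals under change of presentation, $(Np)=(\chi_{\varphi})$ in $\Zz[x]$. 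Both polynomials are monic — $Np$ as recorded just before the statement, $\chi_{\varphi}$ because it is the characteristic polynomial of an integer matrix — and $\Zz[x]^{\times}=\{\pm1\}$, so comparing leading coefficients forces $Np=\chi_{\varphi}$.

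There are slightly more hands-on variants. One can base-change the displayed exact sequence to $\Qq[x]$ (flat, hence exact-preserving) and apply the structure theorem over the principal ideal domain $\Qq[x]$ to $\Lambda\otimes_{\Zz}\Qq$: both $Np$ and $\chi_{\varphi}$ then equal, up to a nonzero rational scalar, the product of the elementary divisors of this module. Most conceptually, the matrix $A$ above is (for the natural ordering of the basis) a block companion matrix, so $p(t)$ is the characteristic polynomial of $\varphi$ over $\OO$; since the characteristic polynomial is compatible with the free restriction of scalars $\Zz[t]\subseteq\OO[t]$ via the norm, one gets $\chi_{\varphi}(t)=N_{\OO[t]/\Zz[t]}(p(t))$, and the right-hand side is $Np$ by definition. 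In any version the only nontrivial ingredient is a standard fact of commutative algebra — invariance of Fitting ideals, or compatibility of determinants with restriction of scalars along a free extension — and everything else is bookkeeping; the points I would be most careful about are checking that the two presentations really do present the same $\Zz[x]$-module $\Lambda$, and the monicity step that promotes the equality of ideals to an equality of polynomials.
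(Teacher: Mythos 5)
Your argument is correct and takes a genuinely different route from the paper's. The paper picks the explicit $\Zz$-basis $\{\omega_i x^j : 1\le i\le d,\ 0\le j< n\}$ of $\Lambda=\OO[x]/(p)$, writes the block companion matrix $A$ of $\varphi$ in that basis, and evaluates $\det(x\cdot\mathrm{id}-A)$ by hand via block column operations, recognizing the outcome as $\det(x^nI+x^{n-1}P_{n-1}+\cdots+P_0)=Np$. You instead identify both polynomials as generators of the zeroth Fitting ideal of $\Lambda$ as a $\Zz[x]$-module, coming from two different square presentations: multiplication by $p$ on the rank-$d$ free $\Zz[x]$-module $\OO[x]$, whose matrix has determinant $Np$ by definition, versus $x\cdot\mathrm{id}-A$ on $\Zz[x]^{nd}$, whose determinant is $\chi_\varphi$ by definition. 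Invariance of Fitting ideals under change of presentation gives $(Np)=(\chi_\varphi)$ in $\Zz[x]$, and monicity of both sides together with $\Zz[x]^{\times}=\{\pm1\}$ removes the sign ambiguity. Both arguments are sound: the paper's is elementary and entirely self-contained, requiring only cofactor expansion; yours is more conceptual, explaining the coincidence as two computations of a single module invariant and replacing the determinant manipulation by one appeal to a standard theorem of commutative algebra. The step you flag but do not actually verify — that the two presentations endow $\Lambda$ with the \emph{same} $\Zz[x]$-module structure — is indeed the one place a careless reader could slip, but it is immediate here: in both presentations $x$ acts on $\OO[x]/(p)$ as multiplication by $x$, which is precisely $\varphi$, so there is no discrepancy to reconcile.
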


\begin{proof}
Let $d=\rank\OO$, $n=\deg p$.
Pick a $\Zz$-basis $1=\omega_1,\omega_2\kdots\omega_d$ of $\OO$.
Let $p=x^n+p_{n-1}x^{n-1}+\cdots +p_0$.
For $i=0\kdots n-1$, let $P_i$ be the matrix of the $\Zz$-linear map
$\alpha\mapsto p_i\alpha$ from $\OO$ to $\OO$ with respect to the
basis of $\OO$ just chosen.
Clearly, $\{ \omega_ix^j:\, i=1\kdots d,\, j=0\kdots n-1\}$
is a $\Zz$-basis of $\Lambda =\OO [x]/(p)$. A straightforward
computation shows that the matrix of $\varphi$ with respect
to this basis is
\[
\begin{pmatrix}
0&\cdots&0&-P_0
\\
I&&0&-P_1
\\
&\ddots&&\vdots
\\
0&&I&-P_{n-1}
\end{pmatrix},
\]
where each entry represents a $d\times d$ integer matrix and $I$ denotes the identity matrix of order $d$. Hence
\begin{eqnarray*}
\chi_{\varphi}&=&
\left|
\begin{matrix}
xI&&&P_0
\\
-I&xI&&P_1
\\
&\ddots&&\vdots
\\
&&-I&xI+P_{n-1}
\end{matrix}\right|
=
\left|
\begin{matrix}
\ 0&\cdots&\ 0&x^nI+x^{n-1}P_{n-1}+\cdots +P_0
\\
-I&&\ 0&*
\\
&\ddots&&\vdots
\\
\ 0&&-I&*
\end{matrix}\right|
\\[0.2cm]
&=&\det(x^nI+x^{n-1}P_{n-1}+\cdots+xP_1 +P_0)=Np.
\end{eqnarray*}
\end{proof}

Using division with remainder, it follows that every
element of $\Lambda =\OO [x]/(p)$ can be represented by a polynomial
in $\OO [x]$ of degree $<n$.
Using this and Lemma \ref{lem:charpol},
the following effective finiteness criterion for a GNS having the finiteness
property is a straightforward translation of Proposition \ref{vince-2}.

\begin{theorem} \label{non-ECNS1}
Let $(p,\DD )$ be a GNS over $\OO$ with $\deg p=n\geq 1$.
Then there is an effectively computable
number $C''$, depending on $\OO$, $p$ and $\DD$, such that the following are
equivalent:
\\
(i) $(p,\DD )$ has the finiteness property;
\\
(ii) the polynomial $Np$ is expansive, and every $a\in\OO [x]$
with $\| a\|\leq C''$, $\deg a <n$ belongs to $R(p,\DD )$.
\end{theorem}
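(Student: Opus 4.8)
\emph{Proof strategy.}
The plan is to transfer Proposition~\ref{vince-2} through the dictionary between GNS and matrix number systems established above. Recall that $(p,\DD)$ corresponds to the matrix number system $(\Lambda,\varphi,\DD)$ with $\Lambda=\OO[x]/(p)$ and $\varphi$ the map $f\pmod p\mapsto x\cdot f\pmod p$; that $(p,\DD)$ has the finiteness property if and only if $(\Lambda,\varphi,\DD)$ does; and, from the computation translating the congruence \eqref{eq:kongruencia}, that for $a\in\OO[x]$ one has $a\in R(p,\DD)$ precisely when the class $a\pmod p$ in $\Lambda$ admits a finite $(\varphi,\DD)$-expansion. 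By Lemma~\ref{lem:charpol} we have $\chi_\varphi=Np$, so the hypothesis ``$\chi_\varphi$ is expansive'' in Proposition~\ref{vince-2}(ii) is exactly ``$Np$ is expansive''. Thus it remains only to rewrite the second half of Proposition~\ref{vince-2}(ii), namely ``every $\vv\in\Lambda$ with $\|\vv\|\le C'$ has a finite $(\varphi,\DD)$-expansion'', in terms of polynomials of degree $<n$.

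For this, note first that since $p$ is monic, division with remainder gives each class in $\Lambda$ a unique representative $a\in\OO[x]$ with $\deg a<n$, and conversely the $a\in\OO[x]$ with $\deg a<n$ are exactly these representatives. Since the particular vector norm on $\Lambda\oz\Rr$ used in Proposition~\ref{vince-2} is immaterial, I would choose on $\Lambda\oz\Rr=(\OO\oz\Rr)[x]/(p)$ the norm given by the maximum of the coefficient norms of the degree-$<n$ representative; with this choice $\|\vv\|$ for $\vv\in\Lambda$ equals the polynomial norm $\|a\|$ of its representative. Taking $C''=C'$, the set $\{a\in\OO[x]:\deg a<n,\ \|a\|\le C''\}$ is then precisely the set of representatives of $\{\vv\in\Lambda:\|\vv\|\le C'\}$, and ``$a\in R(p,\DD)$'' matches ``$a\pmod p$ has a finite $(\varphi,\DD)$-expansion''. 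Hence condition~(ii) of the theorem is literally condition~(ii) of Proposition~\ref{vince-2} for $(\Lambda,\varphi,\DD)$, and the equivalence with~(i) follows; $C''=C'$ is effectively computable because $C'$ is.

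There is no genuinely hard step here: the substance is already contained in Proposition~\ref{vince-2} and Lemma~\ref{lem:charpol}. The only thing to handle with a little care is the norm bookkeeping — one must be sure that the ball $\|\vv\|\le C'$ in the lattice $\Lambda$ pulls back to an \emph{effectively listable} finite set of polynomials. This is why it is convenient to work with the coefficientwise norm and to recall that a $\Zz$-basis of $\OO$ (hence the basis $\{\omega_ix^j\}$ of $\Lambda$ from the proof of Lemma~\ref{lem:charpol}) is part of the given data: then the finitely many $a\in\OO[x]$ with $\deg a<n$ and $\|a\|\le C''$ can be enumerated explicitly and each tested for membership in $R(p,\DD)$ by the finite-expansion algorithm recalled after Proposition~\ref{vince-1}.
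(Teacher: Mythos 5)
Your proposal is correct and follows precisely the route the paper itself takes: identify the GNS with its matrix number system, invoke Lemma~\ref{lem:charpol} to replace $\chi_\varphi$ by $Np$, use division with remainder to represent classes in $\Lambda$ by degree-$<n$ polynomials, and then read Theorem~\ref{non-ECNS1} off Proposition~\ref{vince-2}. You spell out the norm bookkeeping (that one may choose the coefficientwise norm on degree-$<n$ representatives and take $C''=C'$) more explicitly than the paper, which dispatches this in one line as a ``straightforward translation,'' but the argument is the same.
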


If $\OO$ is an order in a number field $K$,
$p=x^n+p_{n-1}x^{n-1}+\cdots +p_0\in\OO [x]$, and $\alpha\mapsto\alpha^{(i)}$ ($i=1\kdots d)$ are the embeddings of $K$ in $\Cc$,
then $Np=\prod_{i=1}^d (x^n+p_{n-1}^{(i)}x^{n-1}+\cdots +p_0^{(i)})$. Thus Theorem \ref{non-ECNS1} is a generalization of Theorem 2.9 of Peth\H{o} and Thuswaldner \cite{PT:2017}.

\section{Digit sets defined by fundamental domains} \label{s3}

Let $\OO$ be an order of rank $d$.
We view $\OO$ as a full rank sublattice of the $\Rr$-algebra
$\OO\oz\Rr$.
We frequently use the simple fact that an element
$\theta\in\OO$ is not a zero divisor of $\OO$
if and only if it is invertible in $\OO\oz\Rr$ (consider
the map $\alpha\mapsto\theta \alpha$).

Recall that a fundamental domain for $\OO\oz\Rr/\OO$
is a subset of $\OO\oz\Rr$ containing precisely one element
from every residue class of $\OO\oz\Rr$ modulo $\OO$.
For a fundamental domain $\FF$ for $\OO\oz\Rr/\OO$ with $0\in\FF$
and
$\theta\in\OO$ which is not a zero divisor, we define
\[
\DD_{\FF ,\theta} :=\theta\FF\cap\OO =\{ \alpha\in\OO :\, \theta^{-1}\alpha\in\FF\}.
\]

The following two lemmas are easy generalizations of
\cite[Lemmas 2.3, 2.4]{PT:2017}.

\begin{lemma}\label{lem:fund-1}
Let $\FF$ be a fundamental domain for $\OO\oz\Rr/\OO$ with $0\in\FF$
and $\theta\in\OO$ not a zero divisor. Then $\DD_{\FF ,\theta}$
is a complete residue system for $\OO$ modulo $\theta$ containing $0$.
\end{lemma}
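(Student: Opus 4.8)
The plan is to show directly that $\DD_{\FF,\theta}=\theta\FF\cap\OO$ contains exactly one representative of each residue class of $\OO$ modulo $\theta\OO$, and that $0$ lies in it. Since $\theta$ is not a zero divisor of $\OO$, by the remark at the start of Section~\ref{s3} it is invertible in $\OO\oz\Rr$, so multiplication by $\theta^{-1}$ is a well-defined $\Rr$-linear automorphism of $\OO\oz\Rr$. The containment $0\in\DD_{\FF,\theta}$ is immediate: $\theta^{-1}\cdot 0=0\in\FF$ by hypothesis, and $0\in\OO$.

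For the main claim, fix $\alpha\in\OO$; I want exactly one $\beta\in\DD_{\FF,\theta}$ with $\beta\equiv\alpha\pmod{\theta\OO}$. First I would translate the condition $\beta\in\DD_{\FF,\theta}$ into $\theta^{-1}\beta\in\FF$, and the congruence $\beta\equiv\alpha\pmod{\theta\OO}$ into $\theta^{-1}\beta\equiv\theta^{-1}\alpha\pmod{\OO}$ (using that $\theta^{-1}(\theta\OO)=\OO$). So the question becomes: among all $\gamma\in\theta^{-1}\OO$ congruent to $\theta^{-1}\alpha$ modulo $\OO$, how many lie in $\FF$? Now the key observation is that $\theta^{-1}\alpha\in\theta^{-1}\OO\subset\OO\oz\Rr$, and since $\FF$ is a fundamental domain for $\OO\oz\Rr/\OO$, there is exactly one element $\gamma_0\in\FF$ with $\gamma_0\equiv\theta^{-1}\alpha\pmod{\OO}$. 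It remains to check that this unique $\gamma_0$ actually lies in $\theta^{-1}\OO$, equivalently that $\beta:=\theta\gamma_0\in\OO$: but $\gamma_0-\theta^{-1}\alpha\in\OO$, so $\theta\gamma_0-\alpha\in\theta\OO\subset\OO$, whence $\theta\gamma_0\in\OO$. Thus $\beta=\theta\gamma_0\in\DD_{\FF,\theta}$ is a representative of the class of $\alpha$, and uniqueness of $\gamma_0$ in $\FF$ forces uniqueness of $\beta$.

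Conversely, any $\beta\in\DD_{\FF,\theta}$ with $\beta\equiv\alpha\pmod{\theta\OO}$ gives $\theta^{-1}\beta\in\FF$ and $\theta^{-1}\beta\equiv\theta^{-1}\alpha\pmod{\OO}$, so $\theta^{-1}\beta=\gamma_0$ by uniqueness in $\FF$, hence $\beta=\theta\gamma_0$ is the same element. This establishes that $\DD_{\FF,\theta}$ is a complete residue system modulo $\theta$. Finally, finiteness: $\DD_{\FF,\theta}$ is in bijection (via $\beta\mapsto\theta^{-1}\beta$) with the representatives in $\FF$ of the classes in $\theta^{-1}\OO/\OO$, and this group is finite of order $|\det(\theta)|$, so $\DD_{\FF,\theta}$ is finite — though finiteness is also automatic once we know it is a complete residue system modulo $\theta$, since $\OO/\theta\OO$ is finite.

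I do not expect any serious obstacle here; the only point requiring a little care is the bookkeeping of the correspondence between congruences modulo $\theta\OO$ in $\OO$ and congruences modulo $\OO$ in $\theta^{-1}\OO$ under multiplication by $\theta^{-1}$, together with the verification that the unique $\FF$-representative of $\theta^{-1}\alpha$ automatically lands back in $\theta^{-1}\OO$. Both of these are formal consequences of $\theta$ being invertible in $\OO\oz\Rr$ and of $\FF$ being a fundamental domain.
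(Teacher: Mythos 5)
Your proof is correct and takes essentially the same route as the paper's: multiply by $\theta^{-1}$ and invoke the fundamental domain property of $\FF$ for $\OO\oz\Rr/\OO$, checking that the resulting representative lands back in $\theta^{-1}\OO$. The paper's version is just more compressed (it writes $\theta^{-1}\alpha=\xi+\beta$ with $\xi\in\FF$, $\beta\in\OO$ and reads off $\theta\xi=\alpha-\theta\beta\in\OO$ without spelling out the congruence bookkeeping), but the content is identical.
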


\begin{proof} Let $\alpha\in\OO$. Then $\theta^{-1}\alpha =\xi +\beta$
with $\xi\in\FF$ and $\beta\in\OO$. Thus,
$\alpha =\theta\xi +\theta\beta$ with $\theta\xi\in\DD_{\FF ,\theta}$.
If $\delta_1,\delta_2$ are two elements of $\DD_{\FF ,\theta}$
with $\delta_1-\delta_2\in\theta\OO$, then $\theta^{-1}\delta_1,\theta^{-1}\delta_2$ are elements of $\FF$ whose difference lies in $\OO$, so $\delta_1=\delta_2$. This proves Lemma \ref{lem:fund-1}.
\end{proof}

\begin{lemma}\label{lem:fund-2}
Let $\theta\in\OO$ not be a zero divisor and let $\DD$ be a complete residue system for $\OO$ modulo $\theta$ with $0\in\DD$.
Then there is a bounded fundamental domain $\FF$ of $\OO\oz\Rr/\OO$
such that $\DD =\DD_{\FF ,\theta}$.
\end{lemma}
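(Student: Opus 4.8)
The plan is to construct $\FF$ explicitly as a union of translates of a fundamental domain for $\theta\OO$ inside $\OO$, scaled down by $\theta^{-1}$. First I would fix an arbitrary bounded fundamental domain $\FF_0$ for $\OO\oz\Rr/\OO$ with $0\in\FF_0$ (for instance the half-open parallelepiped $\{\sum x_i\omega_i:\ 0\le x_i<1\}$ attached to a $\Zz$-basis $1=\omega_1\kdots\omega_d$ of $\OO$). Since $\theta$ is not a zero divisor, multiplication by $\theta$ is invertible on $\OO\oz\Rr$, so $\theta\FF_0$ is a bounded fundamental domain for $(\OO\oz\Rr)/\theta\OO$; equivalently, $\theta\FF_0$ contains exactly one representative of each class in $(\OO\oz\Rr)/\theta\OO$. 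The index $[\OO:\theta\OO]$ equals $|\DD|=:N$, so $\OO$ meets $\theta\FF_0$ in exactly $N$ points, say $e_1\kdots e_N$, and these form a complete residue system for $\OO$ modulo $\theta$. Moreover every point of $\OO\oz\Rr$ is congruent modulo $\theta\OO$ to exactly one of the sets $e_j+\theta\OO\cap(\text{stuff})$; more precisely, $\theta\FF_0=\bigsqcup_{j=1}^N\bigl((e_j+\theta\FF_0)\cap\text{(the right piece)}\bigr)$, so after dividing by $\theta$ we get a partition of $\FF_0$ into $N$ ``tiles'' $T_j$, where $T_j$ is the set of $\xi\in\FF_0$ with $\theta\xi\equiv e_j\pmod{\theta\OO}$, i.e. $T_j=\{\xi\in\FF_0:\ \xi-\theta^{-1}e_j\in\OO\}$.

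Next, using that $\DD$ is by hypothesis a complete residue system for $\OO$ modulo $\theta$ containing $0$, there is a bijection $\sigma:\{1\kdots N\}\to\DD$ with $\sigma(j)\equiv e_j\pmod{\theta\OO}$ for all $j$, and we may arrange $\sigma$ to send the index with $e_j\equiv 0$ to $d=0\in\DD$. Now define
\[
\FF:=\bigcup_{j=1}^N\bigl(T_j+\theta^{-1}\sigma(j)-\theta^{-1}e_j\bigr).
\]
The point of translating the tile $T_j$ by the lattice vector $\theta^{-1}(\sigma(j)-e_j)\in\OO$ (note $\sigma(j)-e_j\in\theta\OO$, so this is indeed in $\OO$) is that it does not change which residue class modulo $\OO$ a point lies in, so $\FF$ is still a fundamental domain for $\OO\oz\Rr/\OO$: the tiles $T_j$ partition $\FF_0$, each is translated by an element of $\OO$, and the union of a complete set of class representatives translated individually by lattice vectors is again a complete set of class representatives. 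It is bounded because $\FF_0$ is bounded, there are finitely many tiles, and finitely many translation vectors. Since $0\in T_{j_0}$ for the tile with $e_{j_0}\equiv 0$ and we chose $\sigma(j_0)=0$ with the translation vector $\theta^{-1}(0-e_{j_0})$ carrying $0$ to $-\theta^{-1}e_{j_0}$; to keep $0\in\FF$ I would instead choose $\FF_0$ and the labelling so that $e_{j_0}=0$ itself (possible since $0\in\theta\FF_0\cap\OO$ as $0\in\FF_0$), so that the translation vector on $T_{j_0}$ is $0$ and hence $0\in\FF$.

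Finally I would verify $\DD=\DD_{\FF,\theta}=\theta\FF\cap\OO$. An element $\alpha\in\OO$ lies in $\theta\FF$ iff $\theta^{-1}\alpha\in\FF$, iff $\theta^{-1}\alpha\in T_j+\theta^{-1}(\sigma(j)-e_j)$ for some $j$, iff $\theta^{-1}\alpha-\theta^{-1}(\sigma(j)-e_j)=\theta^{-1}(\alpha-\sigma(j)+e_j)$ lies in $T_j\subset\FF_0$; combined with $\alpha\in\OO$ and the defining property of $T_j$ (namely $\xi-\theta^{-1}e_j\in\OO$), a short congruence bookkeeping shows this forces $\alpha\equiv\sigma(j)\pmod{\theta\OO}$ and that $\alpha$ is the unique lattice point of $\theta\FF$ in that class, hence $\alpha=\sigma(j)\in\DD$; conversely each $\sigma(j)\in\DD$ arises this way. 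So $\theta\FF\cap\OO=\{\sigma(1)\kdots\sigma(N)\}=\DD$, as required.

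The main obstacle is not any deep idea but the bookkeeping in the last paragraph: one must keep straight the difference between ``$\FF_0$ is a fundamental domain mod $\OO$'' and ``$\theta\FF_0$ is a fundamental domain mod $\theta\OO$'', and check carefully that translating individual tiles by vectors of $\OO$ preserves the fundamental-domain property while realigning exactly which lattice point of $\theta\FF$ falls in each residue class modulo $\theta\OO$. The boundedness and the condition $0\in\FF$ are then immediate from the explicit construction. (Alternatively, one can phrase the whole argument as: the map $\OO/\theta\OO\to\OO/\theta\OO$ permuting the two given residue systems $\{e_j\}$ and $\DD$ lifts, via $\theta^{-1}$, to a self-map of $\OO\oz\Rr$ that is the identity modulo $\OO$ on each tile, and transport $\FF_0$ through it.)
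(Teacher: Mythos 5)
Your final object, after simplification, is the same as the paper's: writing $\GG:=\theta^{-1}\FF_0$ (a bounded fundamental domain for $\OO\oz\Rr$ modulo $\theta^{-1}\OO$), your formula $\FF=\bigcup_j\bigl(T_j+\theta^{-1}\sigma(j)-\theta^{-1}e_j\bigr)$ collapses to $\bigcup_{\delta\in\DD}(\theta^{-1}\delta+\GG)$, which is exactly what the paper constructs and then verifies directly. But the intermediate ``tile $\FF_0$ and translate the tiles'' step has a genuine error. First, the formula $T_j:=\{\xi\in\FF_0:\ \xi-\theta^{-1}e_j\in\OO\}$ does not define a tile: this is $\FF_0\cap(\theta^{-1}e_j+\OO)$, and since $\FF_0$ is a fundamental domain for $\OO$, it is a single point (the $\FF_0$-representative of $\theta^{-1}e_j$). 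Second, even the natural corrected version $T_j:=\FF_0\cap(\theta^{-1}e_j+\GG)$, or $T_j:=\theta^{-1}e_j+\GG$, does not partition $\FF_0$ in general. Concretely, with $\OO=\Zz$, $\theta=-3$, $\FF_0=[0,1)$ one gets $\GG=(-\tfrac13,0]$, $\theta\FF_0\cap\OO=\{-2,-1,0\}$, and $\bigsqcup_j(\theta^{-1}e_j+\GG)=(-\tfrac13,\tfrac23]\neq[0,1)$, while $\bigcup_j\bigl(\FF_0\cap(\theta^{-1}e_j+\GG)\bigr)=[0,\tfrac23]\neq\FF_0$. The reason is that the $\theta^{-1}\OO$-translates of $\GG$ chop $\FF_0$ into \emph{more} than $N=[\OO:\theta\OO]$ boundary fragments, and these are not indexed by $\theta\FF_0\cap\OO$; your argument for why $\FF$ remains a fundamental domain leans on the false identity $\FF_0=\bigsqcup_j T_j$.

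The fix is to skip the tiling of $\FF_0$ entirely. Show that $\GG:=\theta^{-1}\FF_0$ is a bounded fundamental domain for $\OO\oz\Rr/\theta^{-1}\OO$ with $0\in\GG$, define $\FF:=\bigcup_{\delta\in\DD}(\theta^{-1}\delta+\GG)$ directly, and verify by hand (using the lattice chain $\theta\OO\subset\OO\subset\theta^{-1}\OO$ and the fact that $\DD$ is a complete residue system modulo $\theta$) that $\FF$ is a bounded fundamental domain for $\OO\oz\Rr/\OO$ containing $0$, and that $\theta\FF\cap\OO=\DD$. That is precisely how the paper proceeds; your construction is one algebraic simplification away from it, but as written it rests on a partition that does not exist.
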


\begin{proof} Let $1=\omega_1,\omega_2,\ldots,\omega_d$ be a $\Z$-basis of $\OO$, and set $\GG :=\{u_1\theta^{-1}\omega_1+\cdots+ u_d\theta^{-1}\omega_d\;:\; 0\le u_j<1, j=1,\ldots d\}$. Then $\GG$ is a bounded fundamental domain for
$\OO\oz\Rr/\theta^{-1}\OO$ with $0\in\GG$. The set
\[
\FF :=\bigcup_{\delta\in\DD} (\theta^{-1}\delta +\GG )
\]
is obviously bounded. We see that it satisfies the other requirements of the lemma. First observe that
\[
\DD_{\FF ,\theta}=\theta\FF\cap\OO =
\bigcup_{\delta\in\DD} (\delta +\theta\GG )\cap\OO =\DD ,
\]
for $(\delta +\theta\GG )\cap\OO =\{ \delta\}$ for every $\delta\in\OO$
since $\theta\GG$ is a fundamental domain for $\OO\oz\Rr/\OO$
containing $0$.

It remains to show that $\FF$ is a fundamental domain
for $\OO\oz\Rr/\OO$.
We can express any $\eta\in\OO\oz\Rr$ as $\eta =\xi +\theta^{-1}\alpha$
with $\xi\in\GG$, $\alpha\in\OO$, and we have $\alpha =\delta +\theta\beta$
with $\delta\in\DD$, $\beta\in\OO$, thus,
\[
\eta = (\theta^{-1}\delta +\xi )+\beta =\zeta +\beta\ \
\mbox{with } \zeta\in\FF,\ \beta\in\OO.
\]
Further, if $\zeta_1,\zeta_2$ are elements of $\FF$ with
$\zeta_1-\zeta_2\in\OO$,
then writing $\zeta_i=\theta^{-1}\delta_i+\xi_i$ with $\delta_i\in\DD$,
$\xi_i\in\GG$ for $i=1,2$, we see that
\[
\zeta_1-\zeta_2=\theta^{-1}(\delta_1-\delta_2)+(\xi_1-\xi_2).
\]
Thus, $\xi_1-\xi_2\in\theta^{-1}\OO$, hence $\xi_1=\xi_2$, hence
$\theta^{-1}(\delta_1-\delta_2)\in\OO$, hence $\delta_1=\delta_2$,
and finally, $\zeta_1=\zeta_2$. So indeed $\FF$ is a bounded fundamental domain for $\OO\oz\Rr/\OO$.
\end{proof}

We choose any vector norm on $\OO\oz\Rr$ and endow $\OO\oz\Rr$
with the corresponding topology; this topology does not
depend on the chosen norm. Given a subset $\SS$ of $\OO\oz\Rr$,
we denote by $\overline{\SS}$ the closure of $\SS$ with respect
to this topology.

Let $\FF$ be a fundamental domain for $\OO\oz\Rr/\OO$
with $0\in\FF$ and such that $\FF$ is bounded.
We call two elements $\alpha ,\beta$ of $\OO$
$\FF$-\emph{neighbours} of one another if $(\alpha +\overline{\FF})\cap (\beta +\overline{\FF})\not=\emptyset$.
Let $\Delta_{\FF}\subset\OO$ be the set of all $\FF$-neighbours of $0$;
this set contains in particular $0$ itself. Further, if $\alpha\in\Delta_{\FF}$,
then so is $-\alpha$.

\begin{lemma}\label{lem:neighborbasis}
The set of $\FF$-neighbors $\Delta_{\FF}$ of $0$ contains a $\Zz$-basis of $\OO$.
\end{lemma}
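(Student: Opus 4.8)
I want to show that the set $\Delta_\FF$ of $\FF$-neighbours of $0$ spans $\OO\oz\Rr$ as an $\Rr$-vector space; since $\Delta_\FF\subseteq\OO$ and $\OO$ is a full-rank lattice, a spanning subset of $\OO$ automatically contains a $\Zz$-basis of $\OO$ (extract a maximal $\Rr$-linearly independent subset, then pass to the sublattice it generates and note it must be all of $\OO$ — actually one needs a little care here, so let me instead argue directly that $\Delta_\FF$ generates $\OO$ as a group). The key geometric input is that $\overline{\FF}$ is a bounded set whose translates by $\OO$ cover $\OO\oz\Rr$: indeed since $\FF$ is a fundamental domain, $\bigcup_{\gamma\in\OO}(\gamma+\FF)=\OO\oz\Rr$, and a fortiori $\bigcup_{\gamma\in\OO}(\gamma+\overline{\FF})=\OO\oz\Rr$.

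First I would fix a $\Zz$-basis $1=\omega_1,\dots,\omega_d$ of $\OO$ and let $L$ be the subgroup of $\OO$ generated by $\Delta_\FF$. The goal is $L=\OO$. Consider the connected set $\overline{\FF}+L\cdot\mathbb{[0,1]}$ — more precisely, I would look at $\mathcal{U}:=\bigcup_{\gamma\in L}(\gamma+\overline{\FF})\subseteq\OO\oz\Rr$ and show it equals $\OO\oz\Rr$. Suppose not. The complement is $\bigcup_{\gamma\in\OO\setminus L}(\gamma+\FF)$ (using that the $\gamma+\FF$ partition $\OO\oz\Rr$, so the points missed by $\mathcal U$ lie in translates $\gamma+\FF$ with $\gamma\notin L$; one must check no point of $\gamma+\FF$ with $\gamma\notin L$ can lie in $\mathcal U$ — this follows because if $\gamma+\xi\in\gamma'+\overline{\FF}$ with $\xi\in\FF$, $\gamma'\in L$, then $\gamma$ and $\gamma'$ are $\FF$-neighbours, i.e. $\gamma-\gamma'\in\Delta_\FF\subseteq L$, forcing $\gamma\in L$). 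So $\mathcal U$ is both open... no — $\overline{\FF}$ is closed but need not be open, so I cannot directly use connectedness of $\OO\oz\Rr$ on $\mathcal U$ and its complement as they stand.

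To fix this I would thicken slightly: since $\overline{\FF}$ is bounded, pick $r>0$ with $\overline{\FF}\subseteq B(0,r)$ (closed ball). The translates $\{\gamma+\FF:\gamma\in\OO\}$ tile $\OO\oz\Rr$, so any ball $B(x,\varepsilon)$ meets only finitely many of them, and in fact for a point $x\in\gamma_0+\FF$, every point within distance $\varepsilon$ of $x$ lies in some $\gamma+\overline{\FF}$ with $|\gamma-\gamma_0|\le 2r+\varepsilon$; choosing the chain of such $\gamma$'s, consecutive ones are $\FF$-neighbours. Concretely: the relation "$\gamma\sim\gamma'$ iff they are $\FF$-neighbours" generates an equivalence relation on $\OO$ whose classes, I claim, are the cosets of $L$ (clear: $\gamma\sim\gamma'\Rightarrow\gamma-\gamma'\in L$, and within $L$ one... hmm, need $L$ itself connected under $\sim$, which holds since $\Delta_\FF$ generates $L$ and $\alpha,\beta$ being $\FF$-neighbours of $0$ doesn't immediately give $\alpha\sim\alpha+\beta$). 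The cleanest route: define a graph $G$ on vertex set $\OO$ with edges between $\FF$-neighbours; then $\bigcup_{\gamma\in C}(\gamma+\overline{\FF})$ is both open and closed in $\OO\oz\Rr$ for each connected component $C$ of $G$ — open because a point of $\gamma+\FF$ has a neighbourhood contained in $\bigcup_{\gamma'\sim\gamma}(\gamma'+\overline{\FF})$, closed because the union over $C$ is closed (locally finite union of closed sets) — hence by connectedness of $\OO\oz\Rr$ there is only one component, $G$ is connected, and so $\Delta_\FF$ generates $\OO$.

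The main obstacle is this openness/local-finiteness argument — verifying that $\bigcup_{\gamma\in C}(\gamma+\overline{\FF})$ is open, i.e. that a boundary point $x$ of some $\gamma+\overline{\FF}$ is an interior point of the union over the component of $\gamma$. This needs: every point of $\OO\oz\Rr$ lies in $\gamma+\overline{\FF}$ for some $\gamma$ (true, from the covering property); and a small enough ball around $x$ meets only $\gamma'+\overline{\FF}$ for $\gamma'$ in a bounded, hence finite, set, all of which are $\FF$-neighbours of $\gamma$ (since $x\in(\gamma+\overline{\FF})\cap(\gamma'+\overline{\FF})$ when the ball, shrunk appropriately, still meets $\gamma'+\overline{\FF}$ — one has to take closures carefully, but boundedness of $\FF$ makes the bound on $|\gamma-\gamma'|$ explicit). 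Everything else is then routine topology.

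=== REMARK ON THE EXTRACTION STEP ===

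Once $\Delta_\FF$ is known to generate $\OO$ as an abelian group, it contains a $\Zz$-basis: a finite generating set of a free $\Zz$-module of rank $d$ contains a basis (choose a subset that is $\Rr$-linearly independent and of size $d$ — it exists since the set spans $\OO\oz\Rr$ — and then the further generators force this subset to be an actual $\Zz$-basis, because the sublattice it spans has the same rank and, being generated together with the extra elements by a set that equals $\OO$... ). I would in fact just invoke the standard fact that any generating set of $\Zz^d$ contains a basis and cite it or prove it by row/column reduction of the matrix expressing the generators in the basis $\omega_1,\dots,\omega_d$; its Smith normal form is the identity, so $d$ of the columns already generate.
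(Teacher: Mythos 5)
Your first part --- the connectedness argument showing that $\Delta_\FF$ generates $\OO$ as an abelian group --- is correct. The openness step needs exactly the care you flag: boundedness of $\FF$ gives local finiteness of the tiling by closed translates $\gamma'+\overline{\FF}$; every point of $\OO\oz\Rr$ lies in some $\gamma'+\overline{\FF}$; so a small enough ball around a point of $\gamma_0+\overline{\FF}$ is covered by the finitely many translates $\gamma'+\overline{\FF}$ with $\gamma'$ an $\FF$-neighbour of $\gamma_0$. Connectedness of the neighbour graph then gives that every $\gamma\in\OO$ is a finite sum of elements of $\Delta_\FF$.

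The gap is in the extraction step, which as stated is simply false. A generating subset of a free $\Zz$-module of rank $d$ need not contain a $\Zz$-basis: already $\{2,3\}$ generates $\Zz$ but contains neither $1$ nor $-1$. The Smith-normal-form manoeuvre does not rescue this, because column operations change the set of columns; an SNF equal to the identity only tells you that the columns together generate $\Zz^d$, not that some $d$ of them already do. In fact ``generates'' is the most that connectedness of the neighbour graph can give you here: with $\OO=\Zz$ and $\FF=[0,1/2)\cup[5/2,3)$, a bounded fundamental domain for $\Rr/\Zz$ containing $0$, one checks that $\overline{\FF}=[0,1/2]\cup[5/2,3]$ does not meet $\overline{\FF}\pm 1$ but does meet $\overline{\FF}\pm 2$ and $\overline{\FF}\pm 3$, so $\Delta_\FF=\{0,\pm 2,\pm 3\}$, which generates $\Zz$ but contains no $\Zz$-basis of $\Zz$. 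So one cannot pass from ``generates'' to ``contains a basis'' by any soft argument, and indeed what the paper actually uses of this lemma downstream (in the proof of Theorem~\ref{th:1}) is only that $\Delta_\FF$ generates $\OO$ --- which is precisely where your correct first argument ends.
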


\begin{proof}
See \cite[Lemma 2.6]{PT:2017} for the case that $\OO$ is an order
in a number field. The proof given there works in the same way
for arbitrary orders, replacing $\Zz^k$, $\NN$ by $\OO$, $\Delta_{\FF}$.
\end{proof}

Let $p=x^n+p_{n-1}x^{n-1}+\cdots +p_0\in\OO [x]$ such that $p_0$
is not a zero divisor of $\OO$, and put $p_n:=1$. We define the set
\begin{equation}\label{eq:deltaz1}
Z_{\FF ,p} :=\bigg\{
\sum_{j=1}^n \delta_j p_j \;:\; \delta_j\in \Delta_{\FF}
\bigg\}.
\end{equation}

\begin{theorem}\label{th:1}
Let $(p,\DD )$, with $p=x^n+p_{n-1}x^{n-1}+\cdots +p_0\in\OO [x]$ be a GNS over $\OO$ and let $\FF$ be a bounded fundamental domain for $\OO\oz\Rr/\OO$ with $\DD_{\FF ,p_0}=\DD$.
Assume that the following conditions hold (setting $p_n:=1$):
\begin{itemize}
\item[(i)] $Z_{\FF ,p} + \DD \subset \bigcup_{\delta \in \Delta_{\FF}} (\DD + p_0\delta)$, \vskip 2mm
\item[(ii)] $Z_{\FF ,p} \subset \DD \cup (\DD - p_0)$, \vskip 2mm
\item[(iii)] $\left\{ \sum_{j\in J} p_j \;:\; J\subseteq\{1,\ldots, n\}  \right\} \subseteq \DD$.
\end{itemize}
Then $(p,\DD )$ is a GNS over $\OO$ with the finiteness property.
\end{theorem}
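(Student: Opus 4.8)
I would prove the finiteness property by the standard "weight function" / norm-reduction argument: attach to each class $a\in\OO[x]/(p)$ a suitable representative of degree $<n$, measure its size, and show that one step of the $(\varphi,\DD)$-algorithm (subtract the digit $d_0\in\DD$ representing $a\bmod p_0$, then divide by $x$ modulo $p$) strictly decreases this size until it reaches a bounded set, on which conditions (i)--(iii) force termination at $0$. Concretely, write a class as $a=\sum_{i=0}^{n-1} a_i x^i$ with $a_i\in\OO$, and encode the division step explicitly: if $d_0\in\DD$ is the representative of $a_0\bmod p_0$, then $a-d_0=p_0 b$ for some $b\in\OO$ (here I use that $p_0$ is not a zero divisor, so $b$ is the unique such element, computed in $\OO\oz\Rr$), and
\[
x^{-1}(a-d_0)\equiv \sum_{i=0}^{n-1} a_{i+1}x^i - b\,\big(x^{n-1}+p_{n-1}x^{n-2}+\cdots+p_1\big)\pmod p,
\]
with the convention $a_n:=0$. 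Thus the new coefficients are $a'_i = a_{i+1}-b\,p_{i+1}$ for $i=0,\dots,n-1$.

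**Key steps in order.** First I would reduce to coefficient vectors lying in the "transient set" $\bigcup_{\delta\in\Delta_\FF}(\DD+p_0\delta)$ for the top coefficient and, more precisely, show by induction on the number of algorithm steps that if all coefficients $a_0,\dots,a_{n-1}$ of the current representative lie in an appropriate bounded set $\mathcal S$ built from $\DD$ and $\Delta_\FF$, then so do those of the successor. The set $\mathcal S$ should be chosen so that: (a) $\DD\subseteq\mathcal S$; (b) for $a_i\in\mathcal S$ the error term $b$ (determined by $a_0$ and the digit $d_0$) lies in $\Delta_\FF$ — this is where Lemma~\ref{lem:fund-1} and the definition $\DD=\DD_{\FF,p_0}$ enter, since $a_0-d_0\in p_0\OO$ and $p_0^{-1}(a_0-d_0)=p_0^{-1}a_0-p_0^{-1}d_0$ is a difference of an element of $\overline\FF$-type region and an element of $\FF$, hence an $\FF$-neighbour of $0$; (c) the new coefficient $a'_i=a_{i+1}-b\,p_{i+1}$ with $a_{i+1}\in\mathcal S$, $b\in\Delta_\FF$ stays in $\mathcal S$ — and this is exactly what conditions (i), (ii), (iii) are engineered to guarantee, with (iii) handling the base of the induction (the class of $1$, and more generally classes with small coefficients, whose successors involve sums $\sum_{j\in J}p_j$), (i) propagating membership in the transient set $\bigcup_\delta(\DD+p_0\delta)$, and (ii) controlling the elements of $Z_{\FF,p}$ that can appear. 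Once the orbit is trapped in a finite set, I invoke Proposition~\ref{vince-2} (equivalently Theorem~\ref{non-ECNS1}): expansiveness of $Np=\chi_\varphi$ follows because the algorithm's contraction shows every orbit is bounded, hence by Proposition~\ref{vince-1}'s converse direction, actually more directly because a genuine norm-reduction step forces $\|a'\|<\|a\|$ outside a bounded region, which both yields expansiveness and termination; and finiteness on the bounded exceptional set is exactly the closure property just established, together with the observation that $0$ is the only fixed point of the algorithm.

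**Main obstacle.** The delicate point is the bookkeeping that pins down the auxiliary set $\mathcal S$ so that all three inclusions (a)--(c) hold simultaneously, and in particular verifying that the error element $b$ obtained by dividing $a-d_0$ by $p_0$ really lies in $\Delta_\FF$ rather than merely in some larger neighbour set — this requires carefully unwinding that $a_i\in\DD+p_0\Delta_\FF$ forces $p_0^{-1}a_i\in\FF+\Delta_\FF\subseteq\overline\FF+\overline\FF$-translates, so that $p_0^{-1}(a_i-d_i)$ lands in $(\overline\FF-\FF)\cap\OO=\Delta_\FF$. The second tricky part is the induction's base case: one must check that starting from an arbitrary $a\in\OO[x]$ the orbit enters the transient set $\bigcup_{\delta\in\Delta_\FF}(\DD+p_0\delta)$ after finitely many steps (this uses the strict norm decrease coming from expansiveness of $Np$, which in turn must be derived — I would get it from the fact that the map $b\mapsto(a_{i+1}-b\,p_{i+1})_i$ composed with the digit subtraction is eventually contracting because $p_0^{-1}$ shrinks, i.e. all roots of $Np$ being $>1$ in modulus is forced once we know the algorithm cannot diverge). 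Everything else — uniqueness of representatives, the explicit division formula, finiteness of bounded subsets of the lattice $\OO$ — is routine and parallels \cite{PT:2017} with $\Zz^k$ replaced by $\OO$.
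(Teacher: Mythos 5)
Your proposal takes a genuinely different route from the paper, and unfortunately that route has two real gaps.

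The paper's Theorem~\ref{th:1} defers to \cite[Theorem~3.1]{PT:2017}, whose engine is an \emph{additive closure} lemma (Lemma~3.2 there): if $0\in R(p,\DD)$ and $R(p,\DD)$ is closed under $a\mapsto a+\delta$ for every $\delta\in\Delta_\FF$, then --- because $\Delta_\FF$ contains a $\Zz$-basis of $\OO$ by Lemma~\ref{lem:neighborbasis} and $R(p,\DD)$ is automatically closed under $a\mapsto xa+d$, $d\in\DD$ --- one deduces $R(p,\DD)=\OO[x]$. Conditions (i)--(iii) are exactly what is used to establish the closure under $+\Delta_\FF$. Expansiveness of $Np$ is never an intermediate step; it is a \emph{corollary} of the finiteness property via Proposition~\ref{vince-1}. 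This matters, because it is what makes the proof work without any analytic input.

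Your plan instead runs the backward-division algorithm and argues (a) that the orbit contracts into a bounded invariant set $\mathcal S$, (b) that within $\mathcal S$ everything terminates at $0$. Two steps break down. First, to get (a) you need some form of eventual contraction, which you correctly trace back to expansiveness of $Np$; but you then propose to \emph{derive} expansiveness from ``the algorithm cannot diverge,'' which is precisely what you are trying to prove. Hypotheses (i)--(iii) do not independently give you expansiveness, and Proposition~\ref{vince-1} has no converse. This is circular. Second, the invariance of the candidate set $\mathcal S=\bigcup_{\delta\in\Delta_\FF}(\DD+p_0\delta)$ does not follow from (i) as claimed: if $a_{i+1}=d+p_0\delta'$ with $d\in\DD$, $\delta'\in\Delta_\FF\setminus\{0\}$, then the successor coefficient is $a'_i=d+p_0\delta'-b\,p_{i+1}$, and $p_0\delta'$ is \emph{not} an element of $Z_{\FF,p}$ (whose defining sums range only over $j=1,\ldots,n$), so condition (i) gives no control here. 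Your item (c) would need a more elaborate invariant region, and it is not clear that conditions (i)--(iii) supply one directly. (A small typo aside: the division step should read $a_0-d_0=p_0b$, not $a-d_0=p_0b$; the displayed formula for the new coefficients is otherwise correct.) The additive closure argument of \cite{PT:2017} sidesteps both obstacles at once, which is why it is the route the paper takes.
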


\begin{proof}
See \cite[Theorem~3.1]{PT:2017} for a proof in the case that $\OO$
is an order in a number field. The proof carries over without
modifications to arbitrary orders.
\end{proof}

\begin{rem}
For $p\in \Z[x]$ denote by $L(p)$ the sum of the absolute values of the coefficients of $p$. Akiyama and Rao~ \cite[Theorem~3.2]{AR} as well as Scheicher and Thuswaldner~ \cite[Theorem~5.8]{ST} proved that if $p$ is a monic polynomial with non-negative integer coefficients and such that $L(p)<2p(0)$ then $\big(p,\{0,1,\ldots, p(0)-1\}\big)$ is a GNS
over $\Zz$ with the finiteness property.
We show that Theorem~\ref{th:1} is a generalization of this assertion.

Indeed, with $\OO=\Z$ the set $\FF=[0,1)$ is a fundamental domain for $\OO\oz\Rr/\OO$. Moreover
$$
\DD_{\FF ,p(0)}=p(0)\cdot [0,1)\cap \Z = \{0,1,\ldots,p(0)-1\},
$$
which we denote by $\DD$. We have $\Delta_{\FF}=\{-1,0,1\}$, thus
\begin{equation}\label{AR1}
  \bigcup_{\delta \in \Delta_{\FF}} (\DD + f(0)\delta)=[-p(0),2p(0)-1]\cap \Z
\end{equation}
and
\begin{equation}\label{AR2}
\DD \cup (\DD - p(0)) =[-p(0),p(0)-1] \cap \Z.
\end{equation}

Let $f=p_nx^n +p_{n-1}x^{n-1}+\ldots+p_0$ with $p_n=1$. As all coefficients of $p$ are non-negative we have
$$
\max\{|w|\;:\; w\in Z_{\FF,p}\} = \sum_{j=1}^n p_j = L(p)-p(0) < p(0)
$$
by our assumption. This together with \eqref{AR2} implies immediately (ii). We have also the inequalities
$$
-p(0)+1 \le \min Z_{\FF,p}+ \DD < \max Z_{\FF,p}+ \DD \le 2p(0)-2.
$$
Comparing this with \eqref{AR1} we obtain (i). Finally let $J\subseteq\{1,\ldots, n\}$. Then $0\le \sum_{j\in J}p_j \le f(0)-1$, thus (iii) holds too and our claim is proved.
\end{rem}

\section{The finiteness property for large constant terms}\label{s4}

B. Kov\'acs~\cite[Section~3]{Kovacs} proved that if $p(x)\in \Z[x]$ is monic and irreducible, then $p(x+m)$ is a CNS polynomial for all sufficiently large integers $m$. Peth\H{o} \cite{pethoe1991:polynomial_transformation_and} pointed out that the irreducibility assumption is not essential. Peth\H{o} and Thuswaldner \cite{PT:2017} proved a generalization of Kov\'{a}cs' result
for GNS over orders in algebraic number fields. In this section,
we generalize their result further to GNS over arbitrary orders.

Let $\OO$ be an order of rank $d$.
We endow $\OO\oz\Rr$ with
a submultiplicative vector norm $\|\cdot\|$,
\ie $\|\alpha\cdot\beta\|\leq\|\alpha\|\cdot\|\beta\|$ for
$\alpha ,\beta\in\OO\oz\Rr$. For instance, if we choose a
$\Zz$-basis $\{ 1=\omega_1,\omega_2\kdots\omega_d\}$ of $\OO$,
then it is also an $\Rr$-basis of $\OO\oz\Rr$, and we may represent
$\alpha\in\OO\oz\Rr$ by the matrix $M_{\alpha}$
of the linear map $x\mapsto\alpha\cdot x$ with respect to this basis. Then we may choose as norm
$\|\alpha\|=\sum_{i,j} |a_{ij}|$
where the $a_{ij}$ are the entries of $M_{\alpha}$.
But in fact, in our arguments below, the choice of submultiplicative
norm doesn't matter.

For a subset $S\subset\OO\oz\Rr$ and $\varepsilon >0$, we define
the \emph{$\varepsilon$}-neighborhood of $S$ by
\[
(S)_\varepsilon := \{ \xi\in\OO\oz\Rr :\, \|\xi-\eta\| < \varepsilon \mbox{ for some } \eta\in S \}.
\]

Let $\FF$ be a bounded fundamental domain for $\OO\oz\Rr/\OO$ with
$0\in\FF$.
Let $p\in\OO[x]$ be a monic polynomial of degree $n$.
For $\alpha\in\OO$ we define
$p_{\alpha}(x):=p(x+\alpha)$.
Let $\GG_{p,\FF}$ be the family of GNS, consisting of those pairs
$(p_{\alpha},\DD_{\FF ,p(\alpha )})$ such that $p(\alpha )$ is not
a zero divisor of $\OO$.

With this notation we prove the following theorem
on $\GG_{p,\FF}$, which is a generalization of \cite[Theorem 4.1]{PT:2017}. As usual, we identify $r\in\Rr$ with $r\cdot 1$ where
$1$ is the unit element of $\OO$, and thus view $\Rr$ as a subfield
of $\OO\oz\Rr$.

\begin{theorem}\label{th:newKovacs}
Let $\OO$ be an order, $p\in\OO [x]$ a monic polynomial of degree $n$,
and $\FF$ a bounded fundamental domain for $\OO\oz\Rr/\OO$. Choose a submultiplicative norm $\|\cdot\|$ on $\OO\oz\Rr$.
Suppose that there is $\varepsilon >0$ such that
\begin{eqnarray}\label{condition-1}
&&\{ \xi\in \OO\oz\Rr :\, \|\xi\|<\varepsilon\}\subset \FF\cup (\FF -1);
\\
\label{condition-2}
&&\{ \xi\in\OO\oz\Rr:\,
\mbox{there is $r\in\Rr$ with $0<r<\varepsilon$,
$\|r^{-1}\xi -1\|<\varepsilon$}\}\subset\FF .
\end{eqnarray}
Then there is $\eta >0$ such that $(p_{\alpha},\DD_{\FF ,p(\alpha )})$
is a GNS with the finiteness property whenever
\begin{equation}\label{condition-3}
\alpha\in\OO,\ \ \ \|m^{-1}\alpha -1\|<\eta \ \mbox{for some rational integer } m>\eta^{-1} .
\end{equation}
\end{theorem}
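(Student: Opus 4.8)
The plan is to deduce Theorem~\ref{th:newKovacs} from the sufficient criterion of Theorem~\ref{th:1} by showing that, once $\alpha$ is close enough to a large integer $m$, the pair $(p_\alpha,\DD_{\FF,p(\alpha)})$ satisfies the three conditions (i)--(iii) of that theorem with the \emph{same} fundamental domain $\FF$. The first thing I would do is record the shape of $p_\alpha$: writing $p(x)=x^n+p_{n-1}x^{n-1}+\cdots+p_0$ and $p_\alpha(x)=x^n+q_{n-1}(\alpha)x^{n-1}+\cdots+q_0(\alpha)$, each coefficient $q_j(\alpha)$ is a polynomial in $\alpha$ with integer-polynomial coefficients, $q_0(\alpha)=p(\alpha)$, and — this is the key quantitative point — the top-degree behaviour in $\alpha$ is governed by binomial coefficients, so $q_j(\alpha)=\binom{n}{j}\alpha^{n-j}+(\text{lower order in }\alpha)$. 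Hence, dividing by $p(\alpha)$, which has leading term $\alpha^n$, one gets $p(\alpha)^{-1}q_j(\alpha)\to 0$ in $\OO\oz\Rr$ as $\|\alpha\|\to\infty$ along the ray $\alpha\approx m$, and more precisely $\|p(\alpha)^{-1}q_j(\alpha)\|$ can be made smaller than any prescribed $\delta$ once $m>\eta^{-1}$ and $\|m^{-1}\alpha-1\|<\eta$ with $\eta$ small. (I would also note $p(\alpha)$ is invertible in $\OO\oz\Rr$ for large $m$, since its image under every $\Rr$-algebra character is nonzero, being close to $m^n$; this guarantees $p(\alpha)$ is not a zero divisor, so $(p_\alpha,\DD_{\FF,p(\alpha)})\in\GG_{p,\FF}$.)

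Next I would translate the digit set and the neighbour set under this scaling. By Lemma~\ref{lem:fund-1}, $\DD_{\FF,p(\alpha)}=p(\alpha)\FF\cap\OO$ is a complete residue system mod $p(\alpha)$ containing $0$, so $(p_\alpha,\DD_{\FF,p(\alpha)})$ is genuinely a GNS and Theorem~\ref{th:1} applies with $\theta=p(\alpha)=q_0(\alpha)$. The set $\Delta_\FF$ of $\FF$-neighbours of $0$ is a \emph{fixed} finite set (independent of $\alpha$), and by Lemma~\ref{lem:neighborbasis} it contains a $\Zz$-basis of $\OO$. The set $Z_{\FF,p_\alpha}=\{\sum_{j=1}^n\delta_j q_j(\alpha):\delta_j\in\Delta_\FF\}$ then has the property that $p(\alpha)^{-1}Z_{\FF,p_\alpha}$ lies in an arbitrarily small neighbourhood of $0$: indeed every element is a $\Delta_\FF$-combination of the $q_j(\alpha)$, of which there are boundedly many with bounded $\Delta_\FF$-coefficients, and each $p(\alpha)^{-1}q_j(\alpha)$ is small by the previous paragraph. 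So for any prescribed $\delta'>0$ we get $p(\alpha)^{-1}Z_{\FF,p_\alpha}\subset\{\xi:\|\xi\|<\delta'\}$ once $m$ is large and $\alpha$ close to $m$; similarly $p(\alpha)^{-1}(Z_{\FF,p_\alpha}+\DD_{\FF,p(\alpha)})$, after subtracting the near-$1$ term coming from $q_0$... — more carefully, note $Z_{\FF,p_\alpha}$ involves $j=1,\dots,n$ only and $q_0$ does not appear, so it really is a ``small'' set after rescaling by $p(\alpha)^{-1}$.

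Now verify (i)--(iii) of Theorem~\ref{th:1} by pushing everything through the multiplication-by-$p(\alpha)^{-1}$ bijection between $\OO$ and $p(\alpha)^{-1}\OO$, using the hypotheses \eqref{condition-1} and \eqref{condition-2} on $\FF$. For (ii): $p(\alpha)^{-1}Z_{\FF,p_\alpha}\subset\{\xi:\|\xi\|<\varepsilon\}\subset\FF\cup(\FF-1)$ by \eqref{condition-1}, hence $Z_{\FF,p_\alpha}\subset p(\alpha)\FF\cup(p(\alpha)\FF-p(\alpha))$; intersecting with $\OO$ and using $\DD_{\FF,p(\alpha)}=p(\alpha)\FF\cap\OO$ gives $Z_{\FF,p_\alpha}\subset\DD_{\FF,p(\alpha)}\cup(\DD_{\FF,p(\alpha)}-p(\alpha))$, which is exactly (ii) with $p_0$ replaced by $q_0(\alpha)=p(\alpha)$. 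For (iii): $\sum_{j\in J}q_j(\alpha)=q_0(\alpha)[\text{if }0\in J]+\sum_{j\in J,\,j\ge 1}q_j(\alpha)$; after rescaling by $p(\alpha)^{-1}$ this is $\varepsilon'$-close to $1$ (if $0\in J$) or to $0$ (if $0\notin J$) for $m$ large, and in the former case lies in $\FF$ by \eqref{condition-2} (taking $r=1$, or rather noting the element is within $\varepsilon$ of $1$ with a suitable $r$ close to $1$), in the latter case in $\FF$ because $0\in\FF$ and small neighbourhoods of $0$ are in $\FF$ by \eqref{condition-1}... — here one must be a little careful: \eqref{condition-1} only gives $\FF\cup(\FF-1)$ near $0$, not $\FF$; but the subset of $J$ with $0\notin J$ gives an element whose rescaling tends to $\nullv\in\FF$, and since $\FF$ contains some fixed neighbourhood intersected appropriately — actually the clean route is: for $0\notin J$ the integer $\sum_{j\in J}q_j(\alpha)\in\OO$ and its $p(\alpha)$-multiple-inverse is tiny, so it must be the representative $0\in\DD_{\FF,p(\alpha)}$ of the zero class, i.e.\ $\sum_{j\in J}q_j(\alpha)=0$ for $m$ large, which is even stronger; and for $0\in J$ we use \eqref{condition-2}. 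Thus (iii) holds. For (i): $p(\alpha)^{-1}(Z_{\FF,p_\alpha}+\DD_{\FF,p(\alpha)})$ lies within $\varepsilon$ of $\FF$ (since $\DD_{\FF,p(\alpha)}\subset p(\alpha)\overline\FF$ and the $Z$-part rescales to size $<\varepsilon$), so it is covered by $(\FF)_\varepsilon\subset\bigcup_{\delta\in\Delta_\FF}(\FF+\delta)$ — this last inclusion is where Lemma~\ref{lem:neighborbasis}, or rather the definition of $\Delta_\FF$ as the full neighbour set, is used: $\overline\FF$ together with its translates by $\Delta_\FF$ covers a neighbourhood of $\overline\FF$. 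Multiplying back by $p(\alpha)$ and intersecting with $\OO$ turns this into $Z_{\FF,p_\alpha}+\DD_{\FF,p(\alpha)}\subset\bigcup_{\delta\in\Delta_\FF}(\DD_{\FF,p(\alpha)}+p(\alpha)\delta)$, which is (i). Collecting the finitely many smallness requirements on $m$ (and the corresponding $\eta$) yields the claimed $\eta$, and Theorem~\ref{th:1} then gives the finiteness property.

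The main obstacle I anticipate is the bookkeeping in the asymptotic estimate $\|p(\alpha)^{-1}q_j(\alpha)\|\to 0$: one needs the submultiplicativity of the norm to bound $\|p(\alpha)^{-1}\|$ (equivalently to bound $\|\alpha^{-n}\|$ and the correction to it), and one needs to know $p(\alpha)$ is invertible with controlled inverse — this is exactly where the hypothesis that $\alpha$ approximates a genuine large \emph{rational integer} $m$ is essential, since it forces all ``archimedean components'' of $p(\alpha)$ to be near $m^n$ and bounded away from $0$, unlike for a general large element of $\OO$. Once that estimate is in hand, conditions (i)--(iii) fall out by the rescaling argument above, essentially verbatim as in \cite[Theorem~4.1]{PT:2017}; indeed, as the authors note, the number-field proof carries over once one replaces conjugates/embeddings by $\Rr$-algebra characters of $\OO\oz\Rr$, so I would cite \cite{PT:2017} for the detailed verification and concentrate the written proof on the two points that are genuinely order-theoretic rather than field-theoretic: invertibility of $p(\alpha)$ in $\OO\oz\Rr$ and the uniform smallness of $p(\alpha)^{-1}q_j(\alpha)$.
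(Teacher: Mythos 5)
Your overall strategy coincides with the paper's: invoke Theorem~\ref{th:1}, show $p(\alpha)$ is invertible in $\OO\oz\Rr$ via the estimate $p(\alpha)=m^n(1+O(\eta))$, and rescale everything by $p(\alpha)^{-1}$ so that conditions (i)--(iii) reduce to inclusions of small sets in $\FF$, $\FF\cup(\FF-1)$, or $\bigcup_{\delta\in\Delta_\FF}(\FF+\delta)$. Your verifications of (i) and (ii) are correct and essentially identical to the paper's.

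However, your treatment of condition~(iii) contains a genuine error, and it is precisely the point where hypothesis \eqref{condition-2} is needed. First, note that in Theorem~\ref{th:1} one has $J\subseteq\{1,\dots,n\}$, so $0\notin J$ always; the case split you introduce on whether $0\in J$ never arises. What has to be shown is that $\xi:=\sum_{j\in J}p_j(\alpha)/p(\alpha)\in\FF$ for nonempty $J\subseteq\{1,\dots,n\}$. By the Taylor estimates, $\xi=r(1+O(\eta))$ with $r=\sum_{j\in J}\binom{n}{j}m^{-j}$, a \emph{small positive real}. Your proposed ``clean route'' asserts that since $p(\alpha)^{-1}\sum_{j\in J}p_j(\alpha)$ is tiny, the integer $\sum_{j\in J}p_j(\alpha)$ must equal $0$. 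This is false: for instance with $p=x^2$, $\alpha=m$, $J=\{1\}$ one has $p_1(\alpha)=2m\ne 0$; tininess of a ratio says nothing about the numerator, and being the $\DD_{\FF,p(\alpha)}$-representative of the zero class is exactly what $\xi\in\FF$ would assert, not something you get for free. You also misread \eqref{condition-2}: since $0<r<\varepsilon$ is required there with $\varepsilon$ small, $r$ cannot be ``close to $1$''; the set in \eqref{condition-2} is a cone emanating from $0$ around a short piece of the positive real ray (as the paper explicitly remarks), not a neighbourhood of $1$. The correct argument is precisely to take $r=\sum_{j\in J}\binom{n}{j}m^{-j}$, observe $0<r<\varepsilon$ and $\|r^{-1}\xi-1\|<\varepsilon$ for $\eta$ small, and then conclude $\xi\in\FF$ from \eqref{condition-2}. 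Without this, your proof does not establish (iii); and if one could somehow avoid \eqref{condition-2} here, the theorem would hold without that hypothesis, which it does not (this is exactly the distinction between Theorem~\ref{th:newKovacs} and Theorem~\ref{cor:newKovacs}).
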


\noindent
The somewhat more complicated condition \eqref{condition-2}
means that $\FF$ contains a cone emanating from $0$
around a small piece of the positive real line.

\begin{rem}\label{rem:42}
This implies that if $\FF$ is a fundamental domain for $\OO\oz\Rr$
satisfying \eqref{condition-1} and \eqref{condition-2},
then the family $\mathcal{G}_\mathcal{F}$ contains infinitely many GNS with the finiteness property.
\end{rem}

\begin{rem}\label{rem:43}
Condition \eqref{condition-3} cannot be weakened to say $\|\alpha\|$
being sufficiently large. For instance, let $\OO$ be an order
in a number field $K$ not equal to $\Qq$ or an imaginary quadratic field, and choose any bounded fundamental domain $\FF$ with $0\in\FF$
for $\OO\oz\Rr/\OO$.
For $\alpha\in \OO$ define
$\|\alpha\|$ to be the maximum of the absolute values of the conjugates of $\alpha$. This defines a submultiplicative norm on $\OO$,
which we extend to $\OO\oz\Rr$.
Let $p\in\Zz [x]$ be a monic polynomial with $p(0)=0$.
For $\alpha\in\OO$,
denote by $\alpha^{(1)}\kdots\alpha^{(d)}$
the conjugates of $\alpha$. Then $Np_{\alpha}=\prod_{i=1}^d p(x+\alpha^{(i)})$. If one of the conjugates of $\alpha$ has absolute value $\leq 1$ then $Np_{\alpha}$ is not expansive and
hence $(p_{\alpha}, \DD_{\FF ,p(\alpha )})$ cannnot have the
finiteness property. But $\OO$ has elements $\alpha$ with
one of the conjugates of absolute value $\leq 1$ and
$\|\alpha\|$ arbitrarily large.
\end{rem}

We assume that $\rank\OO =d$, $\deg p =n$.
By Taylor's formula,
\begin{eqnarray}\label{taylor}
p_{\alpha}(x)&=&x^n+p_{n-1}(\alpha)x^{n-1}+\cdots +p_0(\alpha )
\\
\nonumber
&&\quad\mbox{where } p_j(\alpha ):=\frac{p^{(j)}(\alpha )}{j!}\ \mbox{for } j=0\kdots n-1.
\end{eqnarray}
Note that $p_j(\alpha )\in\OO$ for $j=0\kdots n-1$.

By expressions $O(r)$, with $r$ a positive real, we denote elements $\xi$ of
$\OO\oz\Rr$ such that $\|\xi\|\leq C\cdot r$, where $C$ is a constant depending
only on $\OO$, $p$, $\FF$ and $\|\cdot\|$.

We start with a simple lemma.

\begin{lemma}\label{pol-estimates}
For $\eta$ sufficiently small in terms of $\OO ,\FF ,p,\|\cdot\|$
we have the following: if $\alpha\in\OO$ satisfies \eqref{condition-3}
for some positive integer $m$, then
\begin{eqnarray*}
&& p_j(\alpha )=\medbinom{n}{j}m^{n-j}(1+O(\eta ))\ \
\mbox{for } j=0\kdots n-1,
\\
&&\mbox{$p(\alpha )$ is not a zero divisor of $\OO$, \ie invertible in $\OO\oz\Rr$.}
\end{eqnarray*}
\end{lemma}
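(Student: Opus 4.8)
The plan is to expand $p_j(\alpha)$ via Taylor's formula around $m$ together with the hypothesis that $\alpha$ is close to $m$, and then separately verify invertibility of $p(\alpha)$ in $\OO\oz\Rr$. First I would write $\alpha = m + \beta$ where, by \eqref{condition-3}, $\|\beta\| = \|\alpha - m\| = \|m(m^{-1}\alpha - 1)\| < m\eta$; note also $m > \eta^{-1}$, so $m\eta > 1$, meaning $\|\beta\|$ need not be small, but $\|\beta\|/m < \eta$ is. Applying Taylor expansion of $p^{(j)}$ around $m$, or equivalently expanding $p_j(\alpha) = p_j(m+\beta)$, gives $p_j(\alpha) = \sum_{k \ge 0} p_{j+k}(m)\binom{j+k}{j}\beta^k$ (a finite sum since $\deg p = n$), and since $p\in\OO[x]$ is a fixed monic polynomial, $p_j(m) = \binom{n}{j}m^{n-j} + (\text{lower order in } m)$, with all the implied coefficients depending only on $p$.

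The key estimate is then to show each term with $k\ge 1$ contributes $O(\eta)\cdot\binom{n}{j}m^{n-j}$ relative to the main term. Using submultiplicativity of the norm, $\|p_{j+k}(m)\beta^k\| \le \|p_{j+k}(m)\|\cdot\|\beta\|^k$. Now $\|p_{j+k}(m)\| = O(m^{n-j-k})$ (the degree of $p_{j+k}$ as a polynomial in $m$ is $n-j-k$, with constant depending only on $p$), and $\|\beta\|^k < (m\eta)^k = m^k\eta^k$, so the product is $O(m^{n-j-k}\cdot m^k\eta^k) = O(m^{n-j}\eta^k) = O(m^{n-j}\eta)$ provided $\eta < 1$. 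Collecting, $p_j(\alpha) = \binom{n}{j}m^{n-j}(1+O(\eta))$ once $\eta$ is small enough, which proves the first assertion; here I should be a little careful that the "lower order in $m$" part of the main term $p_j(m)$ itself is $O(m^{n-j-1}) = O(m^{n-j}/m) = O(m^{n-j}\eta)$ since $m > \eta^{-1}$, so it gets absorbed into the error term as well.

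For invertibility of $p(\alpha) = p_0(\alpha)$: from the case $j=0$ we have $p(\alpha) = m^n(1 + O(\eta))$, so $\|m^{-n}p(\alpha) - 1\| < C\eta$ for a constant $C$ depending only on $\OO, p, \FF, \|\cdot\|$. Taking $\eta$ small enough that $C\eta < 1$, the element $m^{-n}p(\alpha)$ lies within distance $<1$ of the unit $1$ in the Banach algebra $\OO\oz\Rr$ (with respect to the submultiplicative norm), hence is invertible by the standard Neumann series argument $u^{-1} = \sum_{k\ge 0}(1-u)^k$. Therefore $p(\alpha) = m^n \cdot (m^{-n}p(\alpha))$ is invertible in $\OO\oz\Rr$, which by the remark at the start of Section~\ref{s3} is equivalent to $p(\alpha)$ not being a zero divisor in $\OO$. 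The only mild subtlety is ensuring all the "$O$" constants depend solely on the data $\OO, p, \FF, \|\cdot\|$ as required by the convention set just before the lemma — this is automatic since every constant that enters comes from the fixed coefficients of $p$, the fixed rank $d$, and the chosen norm — so I do not anticipate a genuine obstacle here; the argument is essentially a bookkeeping exercise once the substitution $\alpha = m+\beta$ is made and the scaling $\|\beta\| < m\eta$, $m\eta > 1$ is tracked consistently.
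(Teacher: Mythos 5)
Your proof is correct and follows essentially the same route as the paper's: substitute $\alpha = m+\beta$ with $\|\beta\| < m\eta$, estimate each term of the (algebraic) Taylor expansion of $p_j$ around $m$ using submultiplicativity, and invert $p(\alpha)$ via a Neumann series once $\|m^{-n}p(\alpha)-1\|<1$. The only difference is presentational: you write out the expansion $p_j(m+\beta)=\sum_k \binom{j+k}{j}p_{j+k}(m)\beta^k$ explicitly for general $j$, whereas the paper spells out only the $j=0$ case and remarks that $j>0$ is analogous.
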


\begin{proof}
Write $\alpha =m+\beta$.
Then $\|\beta\| <\eta m$ and $m>\eta^{-1}$.
Assume $\eta <1$ as we may.
Then since $p$ is monic of degree $n$,
we have
\begin{eqnarray*}
p_0(\alpha )=p(\alpha )&=&p(m)+O(\beta m^{n-1}+\beta^2m^{n-2}+\cdots +\beta^n)
\\
&=&m^n+O(m^{n-1}+\beta m^{n-1}+\beta^2m^{n-2}+\cdots +\beta^n)
\\
&=& m^n+O(\eta m^n)=m^n(1+O(\eta )),
\end{eqnarray*}
using \eqref{condition-3}. Here the constants implied by the $O$-terms
depend on the norms of the coefficients of $p$.
The identities for $j>0$ are proved in the same manner.
To prove that $p(\alpha )$  has an inverse in $\OO\oz\Rr$,
write $\medfrac{p(\alpha )}{m^n}=1+\gamma$. Assuming $\eta$ is
sufficiently small we have $\|\gamma \|<1$.
The series $(1+\gamma )^{-1}=\sum_{k=0}^{\infty} (-1)^k\gamma^k$ converges
in $\OO\oz\Rr$,
since $\|\gamma^k\|\leq \|\gamma\|^k$ for $k\geq 0$. So
$\medfrac{p(\alpha )}{m^n}$ is invertible in $\OO\oz\Rr$.
Hence $p(\alpha )$
is invertible in $\OO\oz\Rr$.
\end{proof}

\begin{proof}[Proof of Theorem \ref{th:newKovacs}]

We proceed to show that if $\eta$ is sufficiently small, and $\alpha\in\OO$ satisfies
\eqref{condition-3}, then $(p_{\alpha},\DD_{\FF ,p(\alpha )})$ satisfies
conditions (i),(ii),(iii) of Theorem \ref{th:1}, with $p_{\alpha}$, $p_j(\alpha )$,
$\DD_{\FF ,p_{\alpha}}$
instead of $p$, $p_j$, $\DD$. This implies that $(p_{\alpha},\DD_{\FF ,p(\alpha )})$
is a GNS with the finiteness property.
We first observe that by the definition of $\FF$-neighbours,
\[
\overline{\FF}\cap\Big(\bigcup_{\delta\in\OO\setminus \Delta_{\FF}} (\FF+\delta )\Big)=\emptyset.
\]
From this fact and the boundedness of $\FF$ we infer
that after shrinking the $\varepsilon$ from conditions \eqref{condition-1},
\eqref{condition-2} if necessary,
\[
(\FF )_{\varepsilon}\cap \Big(\bigcup_{\delta\in\OO\setminus \Delta_{\FF}} (\FF+\delta )\Big)=\emptyset.
\]
The set $\OO\oz\Rr$ is a disjoint union of all the translates $\FF +\delta$
with $\delta\in\OO$ since $\FF$ is a fundamental domain for $\OO\oz\Rr/\OO$.
So in addition to \eqref{condition-1}, \eqref{condition-2} we have
\begin{equation}\label{condition-4}
(\FF )_{\varepsilon}\subseteq\bigcup_{\delta\in\Delta_{\FF}}(\FF+\delta ).
\end{equation}
We observe that if there is $\varepsilon>0$ for which \eqref{condition-1}, \eqref{condition-2} and \eqref{condition-4} hold, then it can be chosen to depend
only on $\OO$, $\FF$, $p$ and $\|\cdot\|$.

Let $\alpha\in\OO$ satisfy \eqref{condition-3} for some positive integer $m$,
where $\eta$ is a real that is sufficiently small in terms of $\OO$, $\FF$, $p$,
$\|\cdot\|$.
Note that condition (i) of Theorem \ref{th:1} (with $p ,p_j ,\DD$ replaced by $p_{\alpha}$,
$p_j(\alpha), \DD_{\FF ,p(\alpha )}=p(\alpha )\FF\cap\OO$) is equivalent to
\[
\Big(\sum_{j=1}^n \delta_jp_j(\alpha )\Big)+p(\alpha )\FF\cap\OO
\subset\bigcup_{\delta\in\Delta_{\FF}}\big(p(\alpha )\delta +p(\alpha )\FF\cap\OO\big)
\]
for all $\delta_j\in\Delta_{\FF}$ for $j=1\kdots n$. This follows, once we have
established that
\[
\Big(\sum_{j=1}^n \delta_jp_j(\alpha )\Big)+p(\alpha )\FF
\subset\bigcup_{\delta\in\Delta_{\FF}}\big(p(\alpha )\delta +p(\alpha )\FF\big)
\]
which in turn is equivalent to
\begin{equation}\label{condition-1a}
\Big(\sum_{j=1}^n \delta_j\cdot\frac{p_j(\alpha )}{p(\alpha )}\Big)+\FF
\subset
\bigcup_{\delta\in\Delta_{\FF}}\big(\FF +\delta\big)
\end{equation}
for all $\delta_j\in\Delta_{\FF}$, $j=1\kdots n$.
Here we have used that by Lemma \ref{pol-estimates}
$p(\alpha )$ has an inverse in
$\OO\oz\Rr$.
So in order to deduce (i) of Theorem \ref{th:1} it suffices to deduce \eqref{condition-1a}. But from Lemma \ref{pol-estimates}
and condition \eqref{condition-3} it follows that
\begin{equation}\label{norm-estimate}
\sum_{j=1}^n \delta_j\cdot\frac{p_j(\alpha )}{p(\alpha )}=O(m^{-1})=O(\eta )
\end{equation}
and for $\eta$ sufficiently small in terms of
$\OO$, $\FF$, $p$, $\|\cdot\|$ and $\varepsilon$, the left-hand side has norm
smaller than $\varepsilon$. So \eqref{condition-1a} follows from
\eqref{condition-4}. Hence condition (i) of Theorem \ref{th:1} is satisfied.

By a similar argument as above, it follows that condition (ii) of Theorem \ref{th:1}
follows, once we have
\begin{equation}\label{condition-2a}
\sum_{j=1}^n \delta_j\cdot\frac{p_j(\alpha )}{p(\alpha )}\in \FF\cup (\FF -1)
\end{equation}
for all $\delta_j\in\Delta_{\FF}$, $j=1\kdots n$. But
this clearly follows from \eqref{norm-estimate} and \eqref{condition-1} if $\eta$ is sufficiently
small in terms of $\OO$, $\FF$, $p$, $\|\cdot\|$ and $\varepsilon$.
This establishes condition (ii) of Theorem \ref{th:1}.

Finally, condition (iii) of Theorem \ref{th:1} follows once we have shown that
\begin{equation}\label{condition-3a}
\xi :=\sum_{j\in J}\frac{p_j(\alpha )}{p(\alpha )}\in\FF
\end{equation}
for every non-empty subset $J$ of $\{ 1\kdots n\}$. By Lemma \ref{pol-estimates},
\begin{equation}\label{estimation}
\xi =\big(\sum_{j\in J}\medbinom{n}{j}m^{-j}\big)(1+O(\eta )).
\end{equation}
Let $r:=\sum_{j\in J}\medbinom{n}{j}m^{-j}$.
Assuming $\eta$ is sufficiently small, and using $m^{-1}\leq\eta$
by \eqref{condition-3},
we get $0<r<\varepsilon$ and $\|r^{-1}\xi -1\|<\varepsilon$, which by
\eqref{condition-2} implies $\xi\in\FF$.
So condition (iii) of Theorem \ref{th:1} is also satisfied.

It follows that $(p_{\alpha} ,\DD_{\FF,p(\alpha)})$ has the finiteness property.
\end{proof}

Theorem~\ref{th:newKovacs} has the following variation.

\begin{theorem}\label{cor:newKovacs}
Let $\OO$ be an order, $p\in\OO [x]$ a monic polynomial of degree $n$,
and $\FF$ a bounded fundamental domain for $\OO\oz\Rr/\OO$. Choose a submultiplicative norm $\|\cdot\|$ on $\OO\oz\Rr$.
Suppose that $0$ is an interior point of $\FF$.
Then there is $\eta >0$ such that $(p_{\alpha},\DD_{\FF ,p(\alpha )})$
is a GNS with the finiteness property whenever
\begin{equation}\label{condition-3x}
\alpha\in\OO,\ \ \ \| m^{-1}\alpha -1\|<\eta\ \mbox{for some rational integer } m\ \mbox{with } |m|>\eta^{-1} .
\end{equation}
\end{theorem}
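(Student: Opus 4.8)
The plan is to exploit that interiority of $0$ is strong enough both to force conditions \eqref{condition-1} and \eqref{condition-2} of Theorem~\ref{th:newKovacs} and, more importantly, to render the sign of $m$ irrelevant throughout the proof of that theorem. Since $0$ is an interior point of $\FF$, fix $\varepsilon>0$ with $\{\xi\in\OO\oz\Rr:\|\xi\|<\varepsilon\}\subset\FF$; we may assume $\varepsilon\le1$. Then \eqref{condition-1} is immediate, as $\{\xi:\|\xi\|<\varepsilon\}\subset\FF\subset\FF\cup(\FF-1)$, and \eqref{condition-2} holds after replacing $\varepsilon$ by $\varepsilon/2$, since any $\xi$ with $\|r^{-1}\xi-1\|<\varepsilon$ for some $0<r<\varepsilon$ satisfies $\|\xi\|\le r(1+\varepsilon)<2\varepsilon$. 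In particular Theorem~\ref{th:newKovacs} already yields the assertion for $m>\eta^{-1}$, so the genuinely new content is to cover $m<-\eta^{-1}$ as well.

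For that I would first record the analogue of Lemma~\ref{pol-estimates} valid for every \emph{nonzero} integer $m$ with $|m|>\eta^{-1}$: writing $\alpha=m+\beta$ with $\|\beta\|<|m|\eta$, the Taylor expansion \eqref{taylor} gives $p_j(\alpha)=\binom{n}{j}m^{n-j}(1+O(\eta))$ for $j=0,\dots,n-1$, and $p(\alpha)/m^n=1+\gamma$ with $\|\gamma\|<1$, so that $p(\alpha)$ is invertible in $\OO\oz\Rr$. The proof of Lemma~\ref{pol-estimates} carries over word for word, since it uses only $\|\beta\|<|m|\eta$, $|m|^{-1}<\eta$ and $m^n\ne0$, none of which depends on the sign of $m$; the constants in the $O$-terms and the admissible threshold for $\eta$ may be taken to depend only on $\OO,\FF,p,\|\cdot\|$, uniformly in $m$.

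With this in hand, I would argue along the lines of the proof of Theorem~\ref{th:newKovacs}, verifying conditions (i), (ii), (iii) of Theorem~\ref{th:1} for $(p_\alpha,\DD_{\FF,p(\alpha)})$ with $p_\alpha,p_j(\alpha),\DD_{\FF,p(\alpha)}$ in place of $p,p_j,\DD$. Condition (i) reduces, exactly as there, to \eqref{condition-1a}, which holds because $\sum_{j=1}^n\delta_j p_j(\alpha)/p(\alpha)=O(m^{-1})=O(\eta)$ for any $\delta_j\in\Delta_\FF$, so the left side of \eqref{condition-1a} lies in $(\FF)_\varepsilon$ and \eqref{condition-4} applies; and \eqref{condition-4} depends only on the boundedness of $\FF$ together with $0\in\FF$. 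Conditions (ii) and (iii) amount to $\sum_{j=1}^n\delta_j p_j(\alpha)/p(\alpha)\in\FF\cup(\FF-1)$ (this is \eqref{condition-2a}) and $\sum_{j\in J}p_j(\alpha)/p(\alpha)\in\FF$ for every nonempty $J\subseteq\{1,\dots,n\}$ (this is \eqref{condition-3a}); but both quantities have norm $O(\eta)$ — for the second, $\|\sum_{j\in J}p_j(\alpha)/p(\alpha)\|\le\sum_{j=1}^n\binom{n}{j}|m|^{-j}(1+O(\eta))=O(\eta)$ by submultiplicativity — hence for $\eta$ small both lie in $\{\xi:\|\xi\|<\varepsilon\}\subset\FF$, which gives \eqref{condition-2a} and \eqref{condition-3a}. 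So (i)--(iii) hold and Theorem~\ref{th:1} yields the finiteness property. The step worth flagging — and the reason both signs of $m$ are admissible — is precisely this last one: in Theorem~\ref{th:newKovacs} only condition (iii) forced $m>0$, via the cone condition \eqref{condition-2}, whereas with $0$ in the interior of $\FF$ one lands $\sum_{j\in J}p_j(\alpha)/p(\alpha)$ directly in the $\varepsilon$-ball around $0$, so \eqref{condition-2} is never invoked and positivity of $m$ becomes unnecessary.
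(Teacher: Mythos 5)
Your proof is correct and follows essentially the same route as the paper's: run the argument of Theorem~\ref{th:newKovacs} to verify conditions (i)--(iii) of Theorem~\ref{th:1}, noting that for (iii) the assumption $0\in\mathrm{int}(\FF)$ places $\sum_{j\in J}p_j(\alpha)/p(\alpha)$ (of norm $O(\eta)$) directly inside $\FF$, so the cone condition \eqref{condition-2} is never needed and positivity of $m$ becomes irrelevant. You spell out more explicitly than the paper does that Lemma~\ref{pol-estimates} and the verifications of (i), (ii) survive unchanged for negative $m$, which is a welcome clarification of a point the paper treats with ``the proof is exactly the same.''
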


\begin{proof}
The proof is exactly the same as that of Theorem \ref{th:newKovacs},
except for the proof of \eqref{condition-3a}.
To prove this, let $\varepsilon >0$ be such that $\FF$ contains all the elements
of $\OO\oz\Rr$ of norm smaller than $\varepsilon$.
Pick $\alpha\in\OO$ satisfying \eqref{condition-3x}.
Then estimate \eqref{estimation} implies that if $\eta$ is sufficiently
small, then the $\xi$ from \eqref{condition-3a} has norm smaller than $\varepsilon$,
hence lies in $\FF$.
\end{proof}

\begin{rem} \label{rem:44}
Under the conditions of Theorem~\ref{th:newKovacs} there exists
a positive integer $N$ such that
$(p_m,\DD_{\FF ,p(m)})$
is a GNS with the finiteness property for $m\geq N$, while under the more restrictive conditions of Theorem~\ref{cor:newKovacs} there exists a positive integer $N$ such that
$(p_{\pm m},\DD_{\FF ,p(\pm m)})$
are GNS with the finiteness property for $m\geq N$.
\end{rem}

\begin{ex}\label{ex:Kovacs0}
Let $\OO =\Zz\times\Zz =\Zz^2$ with coordinatewise addition
and multiplication, zero element $(0,0)$ and unit element $(1,1)$.
Note that $\OO\oz\Rr =\Rr^2$ with coordinatewise addition and multiplication. Endow $\Rr^2$ with the maximum norm.
Take
\begin{eqnarray*}
\FF &=&\{ (x,y)\in\Rr^2:\, 0\leq x<1,\, -\half\leq y-x <\half\}
\\
&=&\{ x(1,1)+z(0,1):\, 0\leq x<1,\, -\half\leq z<\half\}.
\end{eqnarray*}
Then $\FF$ is a fundamental domain for $\Rr^2/\Zz^2$.
Let $p\in\OO [x]$ be a monic polynomial of degree $n$.
The coefficients of $p$ are pairs $(a,b)\in\Zz^2$, its leading
coefficient being the unit element $(1,1)$ of $\Zz^2$.
Thus we can write $p=(p_1,p_2)$, where $p_1,p_2$ are monic polynomials
in $\Zz [x]$ of degree $n$.
It is easy to see that if the integer $m$ is large enough then $p_1(m),p_2(m)>0$ and the corresponding digit set is
\[
\DD_{\FF ,p(m)}=\{ (x,y)\in\Zz^2:\, 0\leq x<p_1(m),\,
-\half\leq p_2(m)^{-1}y-p_1(m)^{-1}x<\half\}.
\]
One easily verifies that $\FF$ satisfies conditions
\eqref{condition-1}, \eqref{condition-2}, but note that
the inequality $\| r^{-1}\xi -1\|<\varepsilon$ in
\eqref{condition-2} has to be interpreted as $\| r^{-1}\xi-(1,1)\|<\varepsilon$, as $(1,1)$ is the unit element of $\OO$.
So by Theorem \ref{th:newKovacs}, $(p_m,\DD_{\FF ,p(m)})$ has the finiteness property for every sufficiently large $m$.
This means that for every pair of polynomials $a_1,a_2\in\Zz [x]$
there are $L>0$ and pairs $(d_i,d_i')\in\DD_{\FF ,p(m)}$
for $i=1\kdots L$, such that
\[
\mod{a_1}{\sum_{i=0}^L d_ix^i}{p_{1,m}},\ \
\mod{a_2}{\sum_{i=0}^L d_i'x_i}{p_{2,m}},
\]
where $p_{i,m}(x)=p_i(x+m)$ for $i=1,2$.
\end{ex}

\begin{ex}\label{ex:Kovacs1}
Let $\OO$, $p$ be as above but now take
$\FF =[0,1)\times [0,1)$.
This $\FF$ is a fundamental domain for $\Rr^2/\Zz^2$, but it does not satisfy condition \eqref{condition-1}, so Theorem \ref{th:newKovacs}
is not directly applicable. However, the corresponding digit set
can be expressed as a cartesian product
\[
\DD_{\FF ,p(m)}=\DD_{[0,1),p_1(m)}\times\DD_{[0,1),p_2(m)}=\{0,\ldots,p_1(m)-1\}\times \{0,\ldots,p_2(m)-1\}.
\]
The interval $[0,1)$ does satisfy \eqref{condition-1}, \eqref{condition-2}, with $\OO =\Zz$.
So by Theorem \ref{th:newKovacs}, for every sufficiently large positive
integer $m$, both GNS over $\Zz$, $(p_{i,m},\DD_{[0,1),p_i(m)})$ $(i=1,2)$
have the finiteness property. Now $(p_m,\DD_{\FF ,p(m)})$ is the
cartesian product of these two GNS (we assume the meaning of
this is obvious), and it easily
follows that it has the finiteness
property for every sufficiently large integer $m$.
\end{ex}

We will see in the next section that if we impose some other conditions
on $\FF$, then $(p_{-m},\DD_{\FF ,p(-m)})$ does not have the finiteness property for large $m$.

\begin{ex}\label{ex:Kovacs2}
Let $\OO$, $p$ be as above, but now take
$\FF= [-\half ,\half )\times[-\half ,\half )$; then $\FF$ is again a fundamental domain for $\Rr^2/\Zz^2$.
Let $m$ be a positive integer. Then
the corresponding digit set is
\begin{eqnarray*}
&&\DD_{\F,p(\pm m)}=\left\{-\left\lfloor \frac{|p_1(\pm m)|}{2}\right\rfloor, \dots,\left\lfloor \frac{|p_1(\pm m)|-1}{2}\right\rfloor\right\}\times
\\
&&\qquad\qquad\qquad\qquad\qquad\qquad
\times\left\{-\left\lfloor \frac{|p_2(\pm m)|}{2}\right\rfloor, \dots,\left\lfloor \frac{|p_2(\pm m)|-1}{2}\right\rfloor\right\},
\end{eqnarray*}
which is the product of two symmetric digit sets. The zero element
$(0,0)$ of $\Zz^2$ is obviously an interior point of $\FF$, hence, by Theorem \ref{cor:newKovacs} the pairs $(p_{\pm m},\DD_{\F,p(\pm m)})$ are GNS with the finiteness property for all large enough $m$.
\end{ex}

%under the conditions of Theorem~\ref{th:newKovacs}.
%% ?? (JH)

\section{GNS without the finiteness property}\label{s5}

We now prove a negative result on the finiteness property
for GNS over arbitrary orders $\OO$. We start with a simple
lemma that was proved in \cite[Lemma 5.1]{PT:2017}.
We state it here in a more general form.

\begin{lemma} \label{non-ECNS2}
Let $\OO$ be an order and $(p,\DD )$ a GNS over $\OO$.
If there exist a positive integer $h$, elements $d_0,d_1,\dots,d_{h-1}$ of $\DD$ not all $0$ and $q_1,q_2 \in \OO [x]$ with
    \begin{equation} \label{non-ECNSeq1}
   \sum_{j=0}^{h-1} d_j x^j  =  (x^{h} - 1) q_1(x) + q_2(x)p(x),
    \end{equation}
then $(p,\mathcal{D})$ does not have the finiteness property.
  \end{lemma}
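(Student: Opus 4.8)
The plan is to translate everything into the matrix number system language of Section~\ref{s2}: put $\Lambda=\OO[x]/(p)$ and let $\varphi\colon f\bmod p\mapsto xf\bmod p$, so that a polynomial $a=\sum_j a_jx^j$ reduces mod $p$ to $\sum_j\varphi^j(a_j)\in\Lambda$. Writing $w:=q_1\bmod p\in\Lambda$ and reducing \eqref{non-ECNSeq1} modulo $p$ (the term $q_2p$ dropping out) yields the single identity
\[
\sum_{j=0}^{h-1}\varphi^j(d_j)=(\varphi^h-1)(w)\qquad\text{in }\Lambda .
\]
The goal is to deduce from this that the element $-w$, \ie the polynomial $-q_1\in\OO[x]$, has a purely periodic and therefore non-terminating $(\varphi,\DD)$-expansion in the sense of Section~\ref{s2}. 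Since (as recalled there) representability of a polynomial in $(p,\DD)$ is equivalent to the corresponding $(\varphi,\DD)$-expansion being finite, this will show $-q_1$ is not representable, so $(p,\DD)$ cannot have the finiteness property. If one prefers, Proposition~\ref{vince-1} allows one to assume in addition that $Np=\chi_\varphi$ is expansive, but the argument below does not require this.

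To produce the periodic orbit I would extend the digits periodically by $d_j:=d_{j\bmod h}$ for all $j\ge0$ and, for $0\le i\le h$, set $u_i:=\sum_{j=i}^{h-1}\varphi^{\,j-i}(d_j)+\varphi^{\,h-i}(-w)\in\Lambda$. A short computation using the identity above shows $u_0=-w=u_h$, so the $u_i$ extend to a sequence $(u_i)_{i\ge0}$ in $\Lambda$ of period $h$, and termwise one checks $u_i=d_{i\bmod h}+\varphi(u_{i+1})$ for every $i\ge0$. Since $p(0)$ is not a zero divisor of $\OO$, the map $\varphi$ is injective, so each $u_{i+1}=\varphi^{-1}(u_i-d_{i\bmod h})$ is a genuine element of $\Lambda$; moreover $u_i\equiv d_{i\bmod h}\pmod{\varphi(\Lambda)}$, and since $\DD$ is a complete residue system for $\Lambda/\varphi(\Lambda)\cong\OO/p(0)\OO$, the digit $d_{i\bmod h}$ is exactly the one the expansion algorithm of Section~\ref{s2} picks at step $i$ when started from $u_0=-w$. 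Hence the $(\varphi,\DD)$-expansion of $-w$ has digit string $(d_0,\dots,d_{h-1})$ repeated and remainder sequence $(u_i)_{i\ge0}$.

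Finally I would rule out the two ways this could fail to give a contradiction. First, $w\ne0$: if $w=0$, the displayed identity forces $\sum_{j=0}^{h-1}d_jx^j\equiv0\pmod p$, and the uniqueness of representations that follows from $p(0)$ not being a zero divisor makes every $d_j$ zero, against the hypothesis. Second, the expansion of $-w$ does not terminate: if some remainder $u_k$ were $0$, then---$0$ being its own representative in $\DD$---we would get $u_{k+1}=\varphi^{-1}(0-0)=0$, hence $u_i=0$ for all $i\ge k$, and by periodicity for all $i\ge0$, in particular $u_0=-w=0$, contradicting $w\ne0$. Therefore $-q_1$ is not representable in $(p,\DD)$ and the finiteness property fails. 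The step I expect to be the main obstacle is the middle one: verifying carefully that the hand-built periodic sequence really coincides with the output of the $(\varphi,\DD)$-expansion algorithm---so that no competing, possibly finite, representation can intervene---which reduces to the uniqueness of the successive digit choices, hence once more to $p(0)$ not being a zero divisor.
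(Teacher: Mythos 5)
Your proof is correct: you faithfully translate the identity \eqref{non-ECNSeq1} into the $(\varphi,\DD)$-expansion framework of Section~\ref{s2}, exhibit a purely periodic remainder sequence for $-q_1\bmod p$, and use the uniqueness of digit choices (coming from $p(0)$ not being a zero divisor) to conclude that this non-terminating expansion is the only one, so $-q_1$ is not representable. This is the same core argument as the proof the paper cites from Peth\H{o}--Thuswaldner \cite[Lemma 5.1]{PT:2017}, merely recast in the matrix-number-system language rather than directly in terms of polynomial congruences.
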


\begin{proof}
Verbatim the same as the proof of Lemma 5.1 of \cite{PT:2017}.
\end{proof}

We can now prove the main result of the present section.
Recall that $p_{-m}(x):=p(x-m)$ and $\DD_{\FF ,p(-m)}:=p(-m)\FF\cap\OO$.
We view $\Rr$ as a subfield of $\OO\oz\Rr$ by identifying $r\in\Rr$
with $r\cdot 1$, where $1$ is the unit element of $\OO$.

\begin{theorem} \label{non-ECNSmain}
Let $\OO$ be an order, $\|\cdot\|$ a submultiplicative norm on
$\OO\oz\Rr$, and $\FF$ a bounded fundamental domain for $\OO\oz\Rr/\OO$
such that $0\in\FF$ and there is $\varepsilon >0$ such that
\begin{equation}\label{condition-5}
\{ \xi\in\OO\oz\Rr :\, \mbox{there is } r\in\Rr\ \mbox{with } 0<r<\varepsilon ,\
\| r^{-1}(1-\xi) -1\|<\varepsilon \}\subseteq\FF .
\end{equation}
Let $p\in\OO [x]$ be a monic polynomial. Then there exists a positive
integer $N$ such that for every $m>N$, $(p_{-m},\DD_{\FF ,p(-m)})$
does not have the finiteness property.
\end{theorem}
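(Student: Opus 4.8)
The strategy is to apply Lemma~\ref{non-ECNS2} with a suitable small value of $h$, in fact $h=1$ if possible, and otherwise a fixed small $h$ depending on $\OO$. Concretely, I would try to exhibit, for all large $m$, a nonzero digit $d_0\in\DD_{\FF,p(-m)}$ together with $q_1,q_2\in\OO[x]$ satisfying
\begin{equation*}
d_0 = (x-1)q_1(x) + q_2(x)p_{-m}(x).
\end{equation*}
Since $x\equiv 1 \pmod{x-1}$, a polynomial identity of this form exists with a given $d_0$ precisely when $d_0 \equiv q_2(1)p_{-m}(1) \pmod{(x-1)}$ can be solved, i.e.\ when $d_0$ lies in the ideal of $\OO$ generated by $p_{-m}(1) = p(1-m)$. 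The natural choice is therefore $d_0 := p(1-m)\cdot e$ for a suitable $e\in\OO$ (most simply $e=1$), giving $q_2 = e$, $q_1 = e\cdot\frac{d_0 - p_{-m}(x)\,e^{-1}\cdot(\text{correction})}{x-1}$; more cleanly, take $d_0 = p_{-m}(1)$, $q_2(x)=1$, and $q_1(x) = \frac{p_{-m}(1)-p_{-m}(x)}{x-1}\in\OO[x]$, which is a genuine polynomial since $x=1$ is a root of the numerator. Then \eqref{non-ECNSeq1} holds with $h=1$, $d_0 = p(1-m)$, provided we can show $p(1-m)\in\DD_{\FF,p(-m)}$ and $p(1-m)\neq 0$ for all large $m$.

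So the crux reduces to: \textbf{for all sufficiently large $m$, the element $p(1-m)$ lies in $\DD_{\FF,p(-m)} = p(-m)\FF\cap\OO$}, equivalently $p(-m)^{-1}p(1-m)\in\FF$. Here I would invoke the $O$-notation and the Taylor-expansion estimates exactly as in Section~\ref{s4}: writing everything via $p(\pm m) = (\mp m)^n(1+O(m^{-1}))$ (using that $p$ is monic of degree $n$), one computes
\begin{equation*}
\xi_m := p(-m)^{-1}p(1-m) = \frac{(-(m-1))^n}{(-m)^n}\bigl(1+O(m^{-1})\bigr) = \Bigl(1-\tfrac1m\Bigr)^n\bigl(1+O(m^{-1})\bigr) = 1 + O(m^{-1}).
\end{equation*}
Thus $1-\xi_m = O(m^{-1})$. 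Setting $r := 1/m$, for $m$ large we have $0<r<\varepsilon$ and $\|r^{-1}(1-\xi_m)-1\| = \|m(1-\xi_m)-1\|<\varepsilon$ — because $m(1-\xi_m) = m\bigl(1-(1-\frac1m)^n\bigr) + O(m^{-1}) = n + O(m^{-1})$; wait, this gives $m(1-\xi_m)\to n$, not $1$. I would fix this by instead taking $r := \|1-\xi_m\|$-calibrated, i.e.\ $r$ to be the scalar making $r^{-1}(1-\xi_m)$ close to $1$: since $1-\xi_m = \frac{n}{m} + O(m^{-2})$ is a positive real times $1$ up to lower order, put $r := \frac{n}{m}$ (or just $r := $ the real part along $1$), so that $r^{-1}(1-\xi_m) = 1+O(m^{-1})$ and $0<r<\varepsilon$ for $m$ large. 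Then condition~\eqref{condition-5} applies and yields $\xi_m\in\FF$, hence $p(1-m)\in\DD_{\FF,p(-m)}$.

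It remains to check $d_0 = p(1-m)\neq 0$: since $p$ is monic of degree $n\geq 0$, if $n\geq 1$ then $|p(1-m)|\to\infty$ so it is nonzero for large $m$; if $n=0$ then $p=1$ and $d_0=1\neq 0$ trivially (and $p(0)=1$ is a unit, so $\DD=\{0\}$ and actually the statement is vacuous or one notes $\OO[x]/(p)=0$ — I would simply assume $n\geq 1$, as is implicit throughout). Also $p(-m)$ must be a non-zero divisor for $(p_{-m},\DD_{\FF,p(-m)})$ to be a GNS at all: this again follows from $p(-m) = (-m)^n(1+O(m^{-1}))$ being invertible in $\OO\oz\Rr$ for large $m$, exactly as in Lemma~\ref{pol-estimates}. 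Assembling: for $m>N$ with $N$ large enough, $(p_{-m},\DD_{\FF,p(-m)})$ is a GNS, and \eqref{non-ECNSeq1} holds with $h=1$ and the nonzero digit $d_0=p(1-m)$, so by Lemma~\ref{non-ECNS2} it fails the finiteness property.

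\textbf{Main obstacle.} The only delicate point is the bookkeeping in the last paragraph of the argument: producing the right scalar $r$ so that the hypothesis~\eqref{condition-5} — which is about $1-\xi$ being a small \emph{positive multiple of the ring unit}, i.e.\ a cone around the positive real axis reflected through $1$ — actually fires. One must confirm that $1-\xi_m$ genuinely points in the direction of $1\in\OO\oz\Rr$ up to an $O(m^{-1})$ error, which is exactly what the estimate $1-\xi_m = \frac{n}{m}\cdot 1 + O(m^{-2})$ provides, since the leading term is a positive real scalar times the unit. Everything else is routine Taylor estimation in the submultiplicative norm, entirely parallel to Lemma~\ref{pol-estimates} and the proof of Theorem~\ref{th:newKovacs}.
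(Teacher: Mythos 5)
Your proposal is correct and is essentially the paper's own proof, up to an index shift: the paper performs Euclidean division of $p_{-m-1}$ by $x-1$, obtains the remainder $p(-m)=p_{-m-1}(1)$, shows $p(-m)/p(-m-1)=1-n/m+O(m^{-2})\in\FF$ via condition~\eqref{condition-5}, and then invokes Lemma~\ref{non-ECNS2} with $h=1$, while you do the same at index $m$ with $d_0=p(1-m)=p_{-m}(1)$. Your $q_1=\frac{p_{-m}(1)-p_{-m}(x)}{x-1}$ is precisely the negative of the quotient $s_m$ in the paper's Euclidean division, and the Taylor estimate and choice of $r$ match the paper's treatment of $\xi_m$.
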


\noindent
Condition \eqref{condition-5} means that $\FF$ contains a cone
emanating from $1$, around the piece of the real line consisting
of all reals slightly smaller than $1$.

\begin{proof}
Let $d=\rank\OO$, $n=\deg p$.
We claim
that if $m$ is a large enough positive rational integer, then
$p_{-m}(0)=p(-m)\in D_{\mathcal{F},p(-m-1)}$.

Assume that our claim is true. Performing Euclidean division of
$p_{-m-1}$ by $x-1$ we obtain a polynomial $s_{m+1} \in \mathcal{O}[x]$ such that
\[
p_{-m-1} = (x-1)s_{m+1}+ p(-m).
\]
By the claim $p(-m)$ belongs to the digit set $D_{\mathcal{F},p(-m-1)}$ if $m$ is large enough. Applying Lemma~\ref{non-ECNS2} with $h=1, d_0=p(-m), q_1 = -s_{m+1}, q_2=1$, $p=p_{-m-1}$ and $\mathcal{D}=D_{\mathcal{F},p(-m-1)}$ we conclude that $(p_{-m-1} ,D_{\mathcal{F},p(-m-1)})$ is not a GNS with the finiteness property whenever $m$ is large enough.

Now we turn to prove the claim.
Write $p=x^n+p_{n-1}x^{n-1}+\cdots +p_0$. Note that
\begin{eqnarray}\label{first-p-relation}
p(-m)&=&(-m)^n+p_{n-1}(-m)^{n-1}+O(m^{n-2})
\\
\nonumber
&=&(-m)^n\big(1+p_{n-1}m^{-1}+O(m^{-2})\big).
\end{eqnarray}
This implies that for $m$ sufficiently large, we have
$\medfrac{p(-m)}{(-m)^n}=1+\gamma$ with $\|\gamma\|<1$.
The quantity $1+\gamma$ is invertible in $\OO\oz\Rr$
with inverse $1-\gamma +\gamma^2-\cdots$
so $p(-m)$ is invertible in $\OO\oz\Rr$ and
\begin{eqnarray}\label{second-p-relation}
p(-m)^{-1}&=&(-m)^{-n}(1-\gamma +\gamma^2-\cdots )
\\
\nonumber
&=&(-m)^{-n}\big( 1-p_{n-1}m^{-1}+O(m^{-2})\big).
\end{eqnarray}

Establishing our claim means proving that $p(-m)\in p(-m-1)\FF\cap\OO$ for every sufficiently large $m$. For this, it suffices to show that
\begin{equation}\label{quotient-relation}
\frac{p(-m)}{p(-m-1)}\in\FF
\end{equation}
for every sufficiently large $m$. By \eqref{first-p-relation}
and \eqref{second-p-relation} we have
\begin{eqnarray*}
\xi_m :=\frac{p(-m)}{p(-m-1)}&=&\frac{m^n}{(m+1)^n}\cdot(1+p_{n-1}m^{-1}+O(m^{-2}))\cdot
\\
&&\qquad\qquad\qquad\qquad\cdot
(1-p_{n-1}(m+1)^{-1}+O(m^{-2}))
\\
&=&\frac{m^n}{(m+1)^n}\big(1+O(m^{-2}))=1-n\cdot m^{-1}+O(m^{-2}).
\end{eqnarray*}
It is clear that for every sufficiently large $m$,
$\xi_m$ belongs to the set on the left-hand side of \eqref{condition-5},
hence $\xi_m\in\FF$.
This establishes
assertion \eqref{quotient-relation}, hence our claim and our
theorem.
\end{proof}

\begin{ex}
We continue the examination of the GNS given in Examples \ref{ex:Kovacs0}, \ref{ex:Kovacs1}.
As can be verified,
the fundamental domains $\FF$ from both examples satisfy \eqref{condition-5}
(the inequality $\|r^{-1}(1-\xi ) -1\|<\varepsilon$
being interpreted as $\|r^{-1}(1-\xi ) -(1,1)\|<\varepsilon$).
Hence in both cases, $(p_{-m},\DD_{\FF ,p(-m)})$ does not have the finiteness
property for every sufficiently
large $m$. Of course in the case of Example \ref{ex:Kovacs1},
we can alternatively appeal to a Cartesian product type argument.
\end{ex}

\section{Relation between power integral bases and GNS}\label{s6}

The theory of generalized number systems started with investigations in the ring of integers of algebraic number fields (see e.g.\ Brunotte, Huszti, and Peth\H{o}~\cite{BHP} for an overview and a list of relevant literature). This inspired \cite[Theorem~6.2]{PT:2017}, which states
if $\OO$ is a monogenic order of a number field,
then all but finitely many among the $\alpha$ with $\Zz [\alpha ]=\OO$ generate a number system with the finiteness property in $\mathcal{O}$. (All the exceptions are computable effectively.) This result forms an analogue of Kov\'acs and Peth\H{o} \cite[Theorem~5]{KovacsPetho} and is based on a result by Gy\H{o}ry~\cite{Gyory1976,Gyory} on the monogeneity of orders in number fields.
We generalize this to \'etale $\Q$-algebras.

A finite \'{e}tale $\Qq$-algebra $\Omega$ is up to isomorphism a direct product
of number fields $\K_1\times\cdots \times \K_{\ell}$, with coordinatewise
addition and multiplication. The \emph{degree} of $\Omega$
is the dimension of $\Omega$ as a $\Qq$-vector space,
notation $[\Omega :\Qq ]$.

We say that a finite \'etale $\Q$-algebra $\Omega$ is {\it effectively given}, if
it is given in the form $\K_1\times\dots \times \K_{\ell}$, where $\K_1,\dots,\K_{\ell}$ are effectively given as finite extensions of $\Q$,
{\it e.g.}, by minimal polynomials of primitive elements.
Further, an element $\alpha\in \Omega$ is said to be {\it effectively given} if in $\alpha=(\alpha_1,\ldots,\alpha_{\ell})$ with $\alpha_i\in \K_i$, the element $\alpha_i$ is effectively given as a $\Qq$-linear combination of powers of a given primitive
element of $\Kk_i$, see \cite[Sections 3.7 and 8.4]{Evertse_Gyory}.

An order of a finite \'{e}tale $\Qq$-algebra $\Omega$ is an order
$\OO\subset\Omega$ with $\rank\OO =[\Omega :\Qq ]$,
\ie $\Omega\cong\OO\oz\Qq$.
An \'{e}tale order is an order of any finite \'{e}tale
$\Qq$-algebra. By the Artin-Wedderburn Theorem, an order is \'{e}tale
if and only if it has no nilpotent elements.

We say that an \'{e}tale order $\OO$ is effectively given if the
\'{e}tale $\Qq$-algebra $\OO\oz\Qq$ is effectively given,
and if a $\Zz$-basis of $\OO$
is effectively given as a subset of $\OO\oz\Qq$.

We say that two elements $\alpha ,\beta$ of an order $\OO$ are $\Zz$-equivalent,
if $\beta =\alpha +u$ for some $u\in\Zz$. The order $\OO$ is called \emph{monogenic}
if $\OO =\Zz [\alpha ]$ for some $\alpha\in\OO$.
This is equivalent to $\OO$
having a \emph{power integral basis}, \ie a $\Zz$-basis of the form
$\{ 1,\alpha\kdots\alpha^{M-1}\}$ where $M=[\Omega :\Qq ]$.

We recall the following fundamental result.

\begin{proposition} \label{E-Gy}
Let $\OO$ be an effectively given \'{e}tale order.
Then it can be decided effectively whether $\OO$ is monogenic or not. Further, if $\OO$ is monogenic, then there exist only finitely many $\Z$-equivalence classes of $\alpha\in \OO$ such that $\OO=\Z[\alpha]$, and a complete set of representatives for these classes can be effectively determined.
\end{proposition}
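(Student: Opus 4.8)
The plan is to reduce both assertions of Proposition~\ref{E-Gy} to the effective resolution of a single index form equation over $\Zz$, and then to read off everything from the effective finiteness theorem of Evertse and Gy\H{o}ry for monogenic orders in \'etale algebras, namely \cite[Corollary~8.4.7]{Evertse_Gyory}.

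First I would fix notation. Put $\Omega :=\OO\oz\Qq$ and $M:=[\Omega :\Qq ]=\rank\OO$, and fix an effectively given $\Zz$-basis $1=\omega_1,\omega_2\kdots\omega_M$ of $\OO$. Since $\Zz [\alpha ]=\Zz [\alpha +a]$ for every $a\in\Zz$, each $\Zz$-equivalence class of elements of $\OO$ contains a unique representative of the shape $\alpha =a_2\omega_2+\cdots +a_M\omega_M$ with $a_2\kdots a_M\in\Zz$. Then I would recall the \emph{index form} of $\OO$ with respect to this basis: from the multiplication table of $\OO$ one computes a homogeneous polynomial $I\in\Zz [X_2\kdots X_M]$ of degree $M(M-1)/2$ such that, for $\alpha =a_2\omega_2+\cdots +a_M\omega_M$, one has $\Qq [\alpha ]=\Omega$ exactly when $I(a_2\kdots a_M)\neq 0$, and in that case $[\OO :\Zz [\alpha ]]=|I(a_2\kdots a_M)|$. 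Hence $\OO =\Zz [\alpha ]$ holds if and only if $I(a_2\kdots a_M)=\pm 1$, and the $\Zz$-equivalence classes of generators of $\OO$ over $\Zz$ correspond bijectively to the solutions $(a_2\kdots a_M)\in\Zz^{M-1}$ of $I(\xv )=\pm 1$.

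Next I would invoke the Diophantine input. Because $\OO$ is effectively given, the polynomial $I$ and the data attached to it are effectively computable, so \cite[Corollary~8.4.7]{Evertse_Gyory} applies and yields an effectively computable constant $C$, depending only on the given presentation of $\OO$, such that every $\alpha\in\OO$ with $\OO =\Zz [\alpha ]$ is $\Zz$-equivalent to some $\alpha '=a_2\omega_2+\cdots +a_M\omega_M$ with $|a_j|\leq C$ for $j=2\kdots M$; in particular there are only finitely many such $\Zz$-equivalence classes. Finally I would assemble the algorithm. For the second assertion, supposing $\OO$ is monogenic, run through all $\alpha =a_2\omega_2+\cdots +a_M\omega_M$ with $|a_j|\leq C$; for each candidate express $1,\alpha\kdots\alpha^{M-1}$ in the basis $1=\omega_1\kdots\omega_M$ and test whether the resulting $M\times M$ integer matrix has determinant $\pm 1$; the candidates that pass the test are pairwise $\Zz$-inequivalent and, by the bound $C$, form a complete set of representatives of the $\Zz$-equivalence classes of generators of $\OO$. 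For the first assertion, perform the same finite search without presupposing monogenicity: since the bound $C$ is valid for every generator as soon as one generator exists, $\OO$ is monogenic if and only if at least one $\alpha$ in the box $|a_j|\leq C$ satisfies $\OO =\Zz [\alpha ]$. Both searches are finite and effective, which proves the proposition.

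The main obstacle is not of a Diophantine nature --- that part is delivered wholesale by \cite[Corollary~8.4.7]{Evertse_Gyory}, which rests on the theory of unit and decomposable form equations developed in \cite{Evertse_Gyory}. Rather, the care needed is in making the reduction airtight: one must set up the index form for a genuine \'etale algebra (not merely a number field), check that the equation $I(\xv )=\pm 1$ faithfully encodes $\OO =\Zz [\alpha ]$ up to $\Zz$-equivalence, and, crucially for the decidability assertion, confirm that the bound $C$ furnished by the theorem is computable from the presentation of $\OO$ alone, with no prior knowledge that generators exist.
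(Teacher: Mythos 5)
Your proof is correct, and it ultimately rests on the very same source as the paper's: Corollary~8.4.7 of Evertse--Gy\H{o}ry. The paper offers no argument at all beyond remarking that the proposition is a special case of that corollary; you instead first reduce the monogenicity question to an explicit index form equation $I(\xv)=\pm 1$ over $\Zz$, and only then invoke 8.4.7 for an effective bound on its solutions. This is the classical mechanism behind such statements and is a sound way to \emph{explain} the result, but it is strictly more than the paper needs: the cited corollary is formulated for monogenic \'etale orders directly (in the relative setting over $\Zz_{\Kk}$), so specializing to $\Kk=\Qq$ already delivers finiteness, effective enumerability, and hence decidability without passing through the index form formalism. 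Your observation that the bound $C$ is computable from the presentation of $\OO$ alone, so that the decidability of monogenicity does not presuppose the existence of any generator, is exactly the point the paper leaves tacit and is worth having spelled out. One small caveat you should make explicit rather than implicit: the index form of an \'etale order behaves just as in the number field case (degree $M(M-1)/2$, $[\OO:\Zz[\alpha]]=|I(a_2\kdots a_M)|$) \emph{because} an \'etale order has nonzero discriminant; for a non-\'etale order this breaks down, which is precisely why the paper's Proposition cannot be extended, as the subsequent example $\OO=\Zz[x]/(x^3)$ shows.
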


Proposition \ref{E-Gy} does not hold for orders in general.
For instance, let $\OO =\Zz [x]/(x^3)$ and let $\alpha$ denote
the residue class of $x$. Then $\OO =\Zz [\alpha +b\alpha^2]$
for every $b\in\Zz$.

Proposition~\ref{E-Gy} is a special case of a more general effective result of Evertse and Gy\H{o}ry \cite[Corollary 8.4.7]{Evertse_Gyory} and allows to generalize the above mentioned \cite[Theorem 6.2]{PT:2017} from orders in number fields to \'{e}tale orders.

Let $\OO$ be an order. Recall that $\alpha\in\OO$ is not a zero divisor
of $\OO$ if and only if its norm $N(\alpha )$, that is the
determinant of the $\Zz$-linear map $x\mapsto \alpha x$
from $\OO$ to itself, is non-zero.
A  \emph{number system} for $\OO$ is a pair $(\alpha ,\DD )$,
where $\alpha\in\OO$ is not a zero divisor, and $\DD$
is a complete residue system of $\Zz$ modulo $N(\alpha )$.
We say that $(\alpha ,\DD )$ has the finiteness property
if every element of $\OO$ can be written as a finite sum
$\sum_{i=0}^L d_i\alpha^i$ with $d_i\in\DD$ for all $i$.
Note that this implies first that $\OO$ is monogenic,
and second, that $\Zz/N(\alpha )\Zz\cong\OO/\alpha\OO$,
so that $\DD$ is also a complete residue system of $\OO$
modulo $\alpha$.
Clearly,
if $p\in\Zz [x]$ is the minimal polynomial of $\alpha$, then
$(\alpha ,\DD )$ has the finiteness property if and only if
$(p,\DD )$ is a GNS over $\Zz$ with the finiteness property.

For $\alpha\in\OO$ such that $\OO =\Zz [\alpha ]$ and a fundamental domain $\FF$
for $\Rr/\Zz$ with $0\in\FF$, we define $\DD_{\FF ,\alpha }:=p(0)\FF\cap\Zz$,
where $p$ is the minimal polynomial of $\alpha$ over $\Qq$.

\begin{theorem} \label{thm:newKovacsPetho}
Let $\mathcal{O}$ be an \'{e}tale order.
Assume that $\OO$ is monogenic.
Let a bounded fundamental domain $\mathcal{F}$ for $\Rr/\Zz$
be given.
If $0$ is an inner point of $\FF$ then every $\alpha\in\OO$ with $\OO =\Zz [\alpha ]$,
with at most finitely many exceptions, gives rise to a number system
$(\alpha ,\DD_{\FF ,\alpha})$ with the finiteness property.
\end{theorem}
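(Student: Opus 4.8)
The plan is to reduce the statement to the one‑dimensional case of Theorem~\ref{cor:newKovacs} (applied with base order $\Zz$), using Proposition~\ref{E-Gy} to control the set of generators. Since $\OO$ is \'etale and monogenic, Proposition~\ref{E-Gy} provides finitely many representatives $\alpha_1\kdots\alpha_k\in\OO$ of the $\Zz$‑equivalence classes of the elements $\alpha$ with $\OO=\Zz[\alpha]$. Because $\Zz[\alpha_i+u]=\Zz[\alpha_i]$ for every $u\in\Zz$, the set of all $\beta$ with $\OO=\Zz[\beta]$ is exactly $\bigcup_{i=1}^k(\alpha_i+\Zz)$. For each $i$ let $p_i\in\Zz[x]$ be the minimal polynomial of $\alpha_i$ over $\Qq$; since $\OO=\Zz[\alpha_i]$ forces $\OO\oz\Qq=\Qq[\alpha_i]$, the polynomial $p_i$ is monic of degree $M:=\rank\OO$ and coincides with the characteristic polynomial of $x\mapsto\alpha_i x$, so $|p_i(0)|=|N(\alpha_i)|$; more generally, in the notation of Section~\ref{s4} the polynomial $p_{i,-u}$ is the minimal polynomial of $\beta=\alpha_i+u$, and $|p_{i,-u}(0)|=|p_i(-u)|=|N(\beta)|$.

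Next I would apply Theorem~\ref{cor:newKovacs} to each pair $(\Zz,p_i)$, with the given bounded fundamental domain $\FF$ for $\Rr/\Zz=\Zz\oz\Rr/\Zz$ (which has $0$ as an interior point by hypothesis) and the absolute value as submultiplicative norm on $\Rr$. This yields $\eta_i>0$ such that $(p_{i,a},\DD_{\FF,p_i(a)})$ is a GNS over $\Zz$ with the finiteness property whenever $a\in\Zz$ satisfies $|m^{-1}a-1|<\eta_i$ for some rational integer $m$ with $|m|>\eta_i^{-1}$; taking $m=a$, this holds as soon as $|a|>\eta_i^{-1}$. Setting $\eta:=\min_{1\le i\le k}\eta_i$, we conclude that $(p_{i,a},\DD_{\FF,p_i(a)})$ is a GNS over $\Zz$ with the finiteness property for every $i\in\{1\kdots k\}$ and every $a\in\Zz$ with $|a|>\eta^{-1}$.

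It remains to transfer this conclusion to number systems for $\OO$. Given $\beta=\alpha_i+u$ with $\OO=\Zz[\beta]$ and $|u|>\eta^{-1}$, the digit set $\DD_{\FF,\beta}=p_{i,-u}(0)\FF\cap\Zz$ equals $\DD_{\FF,p_i(-u)}$ in the notation of Section~\ref{s3}, and the previous paragraph (applied with $a=-u$) shows that $(p_{i,-u},\DD_{\FF,p_i(-u)})$ is a GNS over $\Zz$ with the finiteness property; in particular $p_i(-u)=\pm N(\beta)\neq 0$, so $\beta$ is not a zero divisor of $\OO$ and $(\beta,\DD_{\FF,\beta})$ is indeed a number system for $\OO$, which — since $p_{i,-u}$ is the minimal polynomial of $\beta$ — has the finiteness property by the observations preceding the theorem. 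Hence every $\beta$ with $\OO=\Zz[\beta]$ failing the conclusion lies in the finite set $\{\alpha_i+u:\ 1\le i\le k,\ u\in\Zz,\ |u|\le\eta^{-1}\}$, proving the theorem.

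This argument is mostly bookkeeping rather than a deep estimate: the analytic content — that a translate $p(x-m)$ of a monic polynomial generates a GNS with the finiteness property once $m$ is large — is entirely contained in Theorem~\ref{cor:newKovacs}. The points that require care are that the minimal polynomial of a generator has degree exactly $\rank\OO$ (so it is a valid input to Theorem~\ref{cor:newKovacs}), the matching of the two conventions $\DD_{\FF,\alpha}=p(0)\FF\cap\Zz$ and $\DD_{\FF,\theta}=\theta\FF\cap\Zz$, and the sign when passing from $\beta=\alpha_i+u$ to $p_{i,-u}$. The only genuinely nontrivial ingredient is Proposition~\ref{E-Gy}, where the hypotheses that $\OO$ be \'etale and monogenic are indispensable; the example $\Zz[x]/(x^3)$ following that proposition shows that this reduction breaks down for general orders.
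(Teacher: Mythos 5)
Your argument is correct and follows the same route the paper (via its reference to the proof of Theorem~6.2 in Peth\H{o}--Thuswaldner) intends: reduce via Proposition~\ref{E-Gy} to finitely many $\Zz$-equivalence classes of generators, observe that $\DD_{\FF,\alpha_i+u}=\DD_{\FF,p_i(-u)}$ and that the minimal polynomial of $\alpha_i+u$ is $p_{i,-u}$ of degree $\rank\OO$, and then invoke Theorem~\ref{cor:newKovacs} with $\OO=\Zz$ (where $0\in\mathrm{int}(\FF)$ allows translations of either sign $|u|>\eta^{-1}$). Your bookkeeping of the sign conventions, the identification $|p_i(0)|=|N(\alpha_i)|$, and the non--zero-divisor check are all handled properly.
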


\begin{proof}
The proof is literally the same as the proof of \cite[Theorem 6.2]{PT:2017}.
\end{proof}

\begin{rem} \label{r:1}
In \cite{Evertse_Gyory}, a generalization of the above Proposition \ref{E-Gy} was proved dealing with the relative case as well, {\it i.e.}, with \'{e}tale orders of the shape
$\OO =\Zz_{\Kk}[\alpha ]$, where $\Zz_{\Kk}$
is the ring of integers of a number field $\Kk$.
To generalize Theorem~\ref{thm:newKovacsPetho} to this situation would require the generalization of Remark~\ref{rem:44} to all $m \in \mathcal{O}$ of which all
conjugates of $m$ are large enough. By Remark \ref{rem:43} such a generalization is only possible for special number fields.
\end{rem}

A finite set $\DD$ of integers can be a complete residue system modulo at most two integers, namely $\pm |\DD|$.  This does not hold any more for algebraic number fields with infinitely many units. Indeed, let $\K$ be such a field and $\Z_{\K}$ be its ring of integers. Let $\DD\subset \Z$ be given and assume that there exist $\alpha\in \Z_{\K}$ such that $\DD$ is a complete residue system modulo $\alpha$. Then $\DD$ is a complete residue system modulo $\alpha \varepsilon$ for each unit $\varepsilon\in \Z_{\K}$. From the next theorem it follows that there are only finitely many $\varepsilon\in \Z_{\K}$ such that the number system $(\alpha \varepsilon,\DD)$ has the finiteness property.

\begin{theorem} \label{th:givendigitset}
Let $\OO$ be an effectively given \'{e}tale order,
and $\DD$ a given finite subset of $\Zz$ containing $0$. Then there exist only finitely many, effectively computable $\alpha\in \OO$ such that the number system
$(\alpha,\DD)$ has the finiteness property.
\end{theorem}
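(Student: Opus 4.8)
The plan is to show that the hypothesis forces every admissible $\alpha$ to have all its conjugates in an effectively bounded annulus, and then to enumerate and test the finitely many surviving candidates. The two ingredients are the necessity of expansivity (Proposition~\ref{vince-1}, equivalently Theorem~\ref{non-ECNS1}) and the fact that, since $\DD$ is \emph{fixed}, the quantity $|N(\alpha)|$ is pinned down to $|\DD|$; it is the combination of these that yields a genuine \emph{two-sided} bound on the conjugates of $\alpha$, and hence on $\alpha$ itself inside $\OO\oz\Rr$.

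First I would record the structural consequences of the assumption. Put $d=\rank\OO$ and suppose $\alpha\in\OO$ is such that $(\alpha,\DD)$ is a number system with the finiteness property. As observed earlier in this section, this forces $\OO=\Zz[\alpha]$, so that $\{1,\alpha\kdots\alpha^{d-1}\}$ is a $\Zz$-basis of $\OO$, the minimal polynomial $p\in\Zz[x]$ of $\alpha$ is monic of degree exactly $d$, and $(p,\DD)$ is a GNS over $\Zz$ with the finiteness property; moreover $\DD$ is a complete residue system of $\Zz$ modulo $N(\alpha)=(-1)^dp(0)$, whence $|N(\alpha)|=|p(0)|=|\DD|$. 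Since the ambient order of the GNS $(p,\DD)$ is $\Zz$, the polynomial $Np$ of Lemma~\ref{lem:charpol} equals $p$ itself, so by Theorem~\ref{non-ECNS1} (or directly by Proposition~\ref{vince-1}) the polynomial $p$ is expansive, i.e.\ all its complex roots have absolute value $>1$. The roots of $p$ are precisely the conjugates $\alpha^{(1)}\kdots\alpha^{(d)}$ of $\alpha$, that is, the images of $\alpha$ under the $d$ $\Qq$-algebra homomorphisms $\OO\oz\Qq\to\Cc$, and $\prod_{j=1}^{d}|\alpha^{(j)}|=|N(\alpha)|=|\DD|$. Combining $|\alpha^{(j)}|>1$ for every $j$ with this product identity gives
\[
1<|\alpha^{(j)}|\le|\DD|\qquad(j=1\kdots d).
\]
Equipping $\OO\oz\Rr$ with the submultiplicative norm $\|\xi\|:=\max_j|\xi^{(j)}|$ — or any fixed norm, which is comparable to it — we conclude that $\|\alpha\|\le|\DD|$ up to a constant depending only on $\OO$.

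It remains to turn this into an effective statement. Because $\OO$ is effectively given, it is realized as a full-rank lattice in the real algebra $\OO\oz\Rr$, so the set $\mathcal S$ of those $\alpha\in\OO$ all of whose conjugates have absolute value $\le|\DD|$ is finite and can be listed effectively (enumerate the lattice points of $\OO$ in an effectively given bounded region). By the previous paragraph, every $\alpha$ as in the theorem lies in $\mathcal S$. For each $\alpha\in\mathcal S$ one can effectively decide whether $N(\alpha)\neq 0$, whether $\DD$ is a complete residue system of $\Zz$ modulo $N(\alpha)$ (so that $(\alpha,\DD)$ is a number system at all), whether $\OO=\Zz[\alpha]$ (a determinant computation in the given $\Zz$-basis), and, when all of these hold, whether the GNS $(p,\DD)$ over $\Zz$ with $p$ the minimal polynomial of $\alpha$ has the finiteness property — the last step by the decision procedure of Theorem~\ref{non-ECNS1}; note that $(\alpha,\DD)$ has the finiteness property exactly when $\OO=\Zz[\alpha]$ and $(p,\DD)$ over $\Zz$ has it. Retaining precisely the $\alpha$ that pass all tests produces the asserted finite, effectively computable list.

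No serious obstacle remains: the argument is short, and the only delicate point is conceptual rather than technical. It is exactly the fixed cardinality $|N(\alpha)|=|\DD|$ that upgrades expansivity — which on its own is merely a \emph{lower} bound on the conjugates — to a two-sided bound; replacing $\alpha$ by $\alpha\varepsilon$ with $\varepsilon$ a unit of $\OO$ having a conjugate of large absolute value leaves both $N(\alpha)$ and the digit set unchanged, yet such $\alpha\varepsilon$ are exactly what the expansivity requirement excludes. This is why a result of this type must fail once the digit set is allowed to vary, in accordance with the discussion preceding the theorem. The remaining effectivity claims — solving polynomial equations in each factor field of $\OO\oz\Qq$, deciding $\Zz[\alpha]=\OO$, and invoking the decision procedure of Theorem~\ref{non-ECNS1} — are all routine given that $\OO$ is effectively given.
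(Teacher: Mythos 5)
Your proof is correct, but it follows a genuinely different route from the paper's. The paper establishes finiteness by invoking Proposition~\ref{E-Gy} (the Evertse--Gy\H{o}ry effective result on monogenic \'etale orders): from the finiteness property it deduces $\OO=\Zz[\alpha]$, hence $\alpha=\beta+u$ for one of finitely many representatives $\beta$ of the $\Zz$-equivalence classes with $\OO=\Zz[\beta]$, and then for each fixed $\beta$ the norm equation $|N(\beta+u)|=|\DD|$ has finitely many integer solutions $u$ (the authors also split off the rational/imaginary-quadratic case, where fixed norm already forces finiteness). You instead exploit the necessity of expansivity (Proposition~\ref{vince-1} via Lemma~\ref{lem:charpol}, noting $Np=p$ for $\OO=\Zz$), so that every conjugate of $\alpha$ has modulus $>1$, and combine that with the constraint $\prod_j|\alpha^{(j)}|=|N(\alpha)|=|\DD|$ forced by the \emph{fixed} digit set to obtain the two-sided bound $1<|\alpha^{(j)}|\le|\DD|$; this puts $\alpha$ in a compact region of $\OO\oz\Rr$, leaving finitely many, effectively testable lattice points. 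Your argument is more elementary and self-contained (it entirely bypasses Gy\H{o}ry's theorem), at the modest cost of relying on Vince's expansivity criterion, which the paper's proof of this particular theorem does not use; the paper's route, on the other hand, foregrounds the link to power integral bases that is thematic for Section~\ref{s6}. Both proofs correctly require and verify $\OO=\Zz[\alpha]$ before passing to a GNS over $\Zz$ and invoking the decision procedure of Theorem~\ref{non-ECNS1} for the final per-candidate check.
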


\begin{proof}
Let $\alpha\in \OO$ and $\DD\subset \Z$ be such that the number system
$(\alpha,\DD)$ has the finiteness property. The set $\DD$ has to be a complete residue system of $\OO$ modulo $\alpha$, which is only possible if $|N(\alpha)| = |\DD|$.
If there is no such $\alpha$ then we are done. Otherwise, if $\OO\oz\Qq =\K_1\times\ldots \times\K_{\ell}$ and $\K_h$ are either the rational or an imaginary quadratic number field for all $h=1,\ldots,\ell$ then there are only finitely many $\alpha$ with $|N(\alpha)| = |\DD|$ and our assertion holds again.

We now assume that there are infinitely many $\alpha\in \OO$ such that $|N(\alpha)| = |\DD|$. If $(\alpha,\DD)$ has the finiteness property then there exist for all $\gamma \in \OO$ an integer $L$ and $d_i\in \DD, i=0,\ldots,L$ such that
  $$
  \gamma = \sum_{i=0}^{L} d_i \alpha^{i},
  $$
  hence $\OO$ is monogenic.
  By Proposition \ref{E-Gy} there exist only finitely many $\Z$-equiva\-lence classes of $\beta\in \OO$ such that $\OO=\Z[\beta]$. Hence there is such a $\beta$ and $u\in \Z$ with $\alpha=\beta+u$. For fixed $\beta$ there are only finitely many effectively computable $u\in \Z$ with $|N(\beta+u)|=|\DD|$, thus the assertion is proved.
\end{proof}

\bibliographystyle{siam}
\bibliography{orderCNS}

\end{document}